\documentclass[draft,12pt]{amsart}

\usepackage{amscd,amsmath,amsthm,amssymb}
\usepackage[left]{lineno}
\usepackage{color}
\usepackage{stmaryrd}
\usepackage[utf8]{inputenc}
\usepackage{epstopdf}
\usepackage{graphicx}
\usepackage{xcolor}

\usepackage{tikz}

\definecolor{verylight}{gray}{0.97}
\definecolor{light}{gray}{0.9}
\definecolor{medium}{gray}{0.85}
\definecolor{dark}{gray}{0.6}

\def\NZQ{\mathbb}               

\def\KK{{\NZQ K}}

\def\frk{\mathfrak}               

\def\mm{{\frk m}}

\def\KK{{\NZQ K}}

\renewcommand{\qedsymbol}{$\square$} 

\def\G{{\mathcal G}}

\def\ab{{\mathbf a}}

\def\0b{{\mathbf 0}}

\def\reg{{\mathbf reg}}
\def\height{\operatorname{ht}}
\def\depth{\operatorname{depth}}
\def\dstab{\operatorname{dstab}}

\def\opn#1#2{\def#1{\operatorname{#2}}} 
\opn\chara{char} \opn\length{\ell} \opn\pd{pd} \opn\rk{rk}
\opn\projdim{proj\,dim} \opn\injdim{inj\,dim} \opn\rank{rank}
\opn\depth{depth} \opn\grade{grade} \opn\height{height}
\opn\embdim{emb\,dim} \opn\codim{codim}
\opn\first{first} \opn\last{last}

\opn\Tr{Tr} \opn\bigrank{big\,rank}
\opn\superheight{superheight}\opn\lcm{lcm}
\opn\trdeg{tr\,deg}
	\opn\reg{reg} \opn\lreg{lreg} \opn\ini{in} \opn\lpd{lpd}
	\opn\size{size} \opn\sdepth{sdepth}
	\opn\link{link}\opn\fdepth{fdepth}\opn\lex{lex}
	\opn\tr{tr}
	\opn\type{type}
	\opn\gap{gap}
	\opn\arithdeg{arith-deg}
	\opn\HS{HS}
	\opn\GL{GL}
	\opn\div{div} \opn\Div{Div} \opn\cl{cl} \opn\Cl{Cl}
	\opn\Spec{Spec} \opn\Supp{Supp} \opn\supp{supp} \opn\Sing{Sing}
	\opn\Ass{Ass} \opn\Min{Min}\opn\Mon{Mon}
	\opn\Ann{Ann} \opn\Rad{Rad} \opn\Soc{Soc}\opn\Deg{Deg}
    \opn\odd{odd}
    \opn\even{even}
	\opn\Im{Im} \opn\Ker{Ker} \opn\Coker{Coker} \opn\Am{Am}
	\opn\Hom{Hom} \opn\Tor{Tor} \opn\Ext{Ext} \opn\End{End}
	\opn\Aut{Aut} \opn\id{id}
	
	\opn\nat{nat}
	\opn\pff{pf}
	\opn\Pf{Pf} \opn\GL{GL} \opn\SL{SL} \opn\mod{mod} \opn\ord{ord}
	\opn\Gin{Gin} \opn\Hilb{Hilb}\opn\sort{sort}
	\opn\PF{PF}\opn\Ap{Ap}
	\opn\mult{mult}
	\opn\bight{bight}
	\opn\aff{aff}
    \opn\esupp{esupp}
	\opn\relint{relint} \opn\st{st}
	\opn\lk{lk} \opn\cn{cn} \opn\core{core} \opn\vol{vol}  \opn\inp{inp} \opn\nilpot{nilpot}
	\opn\link{link} \opn\star{star}\opn\lex{lex}\opn\set{set}
	\opn\width{wd}
	\opn\Fr{F}
	\opn\QF{QF}
	\opn\G{G}
	\opn\type{type}\opn\res{res}
	\opn\conv{conv}
	\opn\Ind{Ind}
	\opn\gr{gr}
	
	\def\pot#1#2{#1[\kern-0.28ex[#2]\kern-0.28ex]}

	\opn\dirlim{\underrightarrow{\lim}}
	\opn\inivlim{\underleftarrow{\lim}}
	\let\to=\rightarrow
	
	\def\Implies{\ifmmode\Longrightarrow \else
		\unskip${}\Longrightarrow{}$\ignorespaces\fi}
	\def\implies{\ifmmode\Rightarrow \else
		\unskip${}\Rightarrow{}$\ignorespaces\fi}
	\def\iff{\ifmmode\Longleftrightarrow \else
		\unskip${}\Longleftrightarrow{}$\ignorespaces\fi}

	\let\:=\colon
	\newtheorem{Theorem}{Theorem}[section]
	\newtheorem{Lemma}[Theorem]{Lemma}

	\newtheorem{Remark}[Theorem]{Remark}
	
	\newtheorem{Example}[Theorem]{Example}
	
	\newtheorem{Definition}[Theorem]{Definition}

	\let\epsilon\varepsilon
	\let\kappa=\varkappa
	\def\qed{\ifhmode\textqed\fi
		\ifmmode\ifinner\quad\qedsymbol\else\dispqed\fi\fi}
	\def\textqed{\unskip\nobreak\penalty50
		\hskip2em\hbox{}\nobreak\hfil\qedsymbol
		\parfillskip=0pt \finalhyphendemerits=0}
	\def\dispqed{\rlap{\qquad\qedsymbol}}
	
	\opn\dis{dis}
	\def\pnt{{\raise0.5mm\hbox{\large\bf.}}}
	
	\opn\Lex{Lex}
	
\begin{document}
		\title {Depth of  powers of the  edge ideal of an increasing weighted path}
\date{\today}

	\author {Jiaxin Li}
\address{School of Mathematical Sciences, Soochow University, Suzhou 215006, P. R. China}
\email{lijiaxinworking@163.com}
		
\author {Dancheng Lu}
\address{School of Mathematical Sciences, Soochow University, Suzhou 215006, P. R. China}
\email{ludancheng@suda.edu.cn}

\author{Thanh Vu}
\address{Institute of Mathematics, VAST, 18 Hoang Quoc Viet, Hanoi, Vietnam}
\email{vuqthanh@gmail.com}
		
    \author{Guangjun Zhu$^{\ast}$}
    \address{ School of Mathematical Sciences, Soochow University, Suzhou 215006, P. R. China}
\email{zhuguangjun@suda.edu.cn}

\thanks{ $^{\ast}$ Corresponding author.}
	
		\thanks{2020 {\em Mathematics Subject Classification}.
			Primary 13B22, 13F20; Secondary 05C99, 05E40}

		\thanks{Keywords:  Depth, edge-weighted graph, edge ideal}

		
		

		\begin{abstract}
	We provide exact formulas for the depth of the quotient ring of powers of the edge ideal of an increasing weighted path.
		\end{abstract}
		\setcounter{tocdepth}{1}
		
		\maketitle

	\section{Introduction}

Let $G$ be a finite simple graph with vertex set $V(G)$ and edge set $E(G)$. Suppose that ${\bf w}\colon E(G)\to \mathbb{Z}_{>0}$ is a weight function defined on the edges of $G$, and we denote the pair $(G,{\bf w})$ by $G_{\bf w}$. We call $G_{\bf w}$ an \emph{edge-weighted graph} with $G$ as its underlying graph.
For an edge-weighted graph $G_{\bf w}$ with vertex set $V(G)=\{x_1,\ldots,x_n\}$, its \emph{edge ideal} was first introduced in \cite{PS}. This ideal is a monomial ideal in the polynomial ring $S=\KK[x_1,\ldots,x_n]$ over a field $\KK$, which is defined by 
\[
I(G_{\bf w})
=\bigl( (x_ix_j)^{{\bf w}(x_ix_j)} \mid x_ix_j \in E(G)\bigr).
\]

If there exists an edge $e \in E(G)$ such that ${\bf w}(e)\ge 2$, then $G_{\bf w}$ is said to be \emph{nontrivially weighted}, and $e$ is referred to as a \emph{nontrivially weighted edge}. Otherwise, $G_{\bf w}$ is \emph{trivially weighted}, and every edge of $G_{\bf w}$ is a \emph{trivially weighted edge}. In this trivial case, $I(G_{\bf w})$ coincides with the classical edge ideal of the underlying graph $G$,  an ideal that has been extensively studied in the literature \cite{BC,FM,HHV,M,MTV}.

For a homogeneous ideal $I \subset S$, Brodmann \cite{B} showed that, for $t \gg 0$, the depth $\depth(S/I^t)$ becomes eventually constant. The smallest integer $t_0$ such that 
\[
\depth(S/I^t)=\depth(S/I^{t_0}) \text{\  for all } t \ge t_0
\]
 is called the \emph{index of depth stability} of $I$ and is denoted by $\dstab(I)$.
Trung \cite{T} determined the limit depth for arbitrary edge ideals of graphs and described the index of depth stability for certain classes of graphs. More recently, Lam, Trung, and Trung \cite{LTT} provided a complete combinatorial characterization of the index of depth stability for edge ideals of arbitrary graphs.

It is also worth noting that, although the index of depth stability of edge ideals is now well understood, the exact values of $\depth(S/I^t)$ for intermediate powers of edge ideals have only recently been computed for a limited number of graph classes; see, for example, \cite{BC,HHV,MTV}.

Turning to edge ideals of edge-weighted graphs, far less is known regarding both the limit depth and the index of depth stability. One notable result in this direction was recently established by Hien, Li, Trung, and Zhu \cite{HLTZ} for the case of increasing weighted trees.

The depth of powers of edge ideals of edge-weighted graphs has also been computed for some special classes of graphs; see, for example, \cite{LVZ,ZDCL,ZLCY}. Notably, the limit depth remains unknown even for edge ideals of arbitrary edge-weighted paths.

In this paper, we take a further step toward understanding the depth of powers of edge ideals of arbitrary edge-weighted paths by focusing on the case of increasing weighted paths. First, we introduce some notation for convenience. Let $P_{\bf w}^{n+1}$ denote an edge-weighted path with vertex set $V(P_{\bf w}^{n+1})=\{x_1,\ldots,x_{n+1}\}$ and edge set 
\[
E(P_{\bf w}^{n+1})=\{e_i \mid e_i := x_ix_{i+1}\text{ for } 1 \le i \le n\}.
\]
We say that $P_{\bf w}^{n+1}$ is an \emph{increasing weighted path} if ${\bf w}(e_i) \le {\bf w}(e_{i+1})$ for all $i$.

Given a weighted path $P_{\bf w}^{n+1}$, let
 \[
\Delta = \{\, i \in [n-2] \mid {\bf w}(e_i) = {\bf w}(e_{i+1}) \,\}.
\]
 The set $\Delta$ can be decomposed into maximal blocks 
 \[
 \Delta = \delta_1 \cup \cdots \cup \delta_p,
 \]
  where each $\delta_i$ consists of consecutive indices and satisfies
   \[
  \min(\delta_{i+1}) - \max(\delta_i) \ge 2.
  \]
A block $\delta$ is said to be of type $j$ (for $j = 0,1,2$) if $|\delta| \equiv j \pmod{3}$. Adjacent blocks of type $1$ and type $2$ can be \emph{glued} together when the distance between them is exactly $2$. Using this gluing operation, $\Delta$ can be decomposed as 
\[
	\Delta = \Delta_1 \cup \cdots \cup \Delta_q,
\]
 where each $\Delta_i$ is either a block of type $0$ or a maximal extended block formed by gluing together type $1$ and type $2$ blocks.

For $i = 1,\ldots,q$, let $t_i$ and $s_i$ denote the numbers of type $1$ and type $2$ blocks in $\Delta_i$, respectively, and let $r_i \in \{0,1\}$ be the residue of $t_i$ modulo $2$. We define
\[
a(\Delta) = \sum_{i=1}^q r_i, \quad b(\Delta) = \sum_{i=1}^q s_i + \sum_{i=1}^q \frac{t_i - r_i}{2}, \quad \text{and} \quad c(\Delta) = \frac{|\Delta| - a(\Delta) - 2b(\Delta)}{3}.
\]
Let $k(\Delta) = a(\Delta) + b(\Delta) + c(\Delta)$. We then define the piecewise function
\[
d(\Delta, t)=
\begin{cases}
k(\Delta) - t + 2, & \text{if } 1 \le t \le a(\Delta), \\[6pt]
k(\Delta)+1- \left\lfloor\frac{t + a(\Delta)-1}{2} \right\rfloor, 
& \text{if } a(\Delta) + 1 \le t \le a(\Delta) + 2b(\Delta), \\[8pt]
k(\Delta)+1- \left\lfloor \frac{t + 2a(\Delta) + b(\Delta) - 1}{3} \right\rfloor, 
& \text{if } a(\Delta) + 2b(\Delta) + 1 \le t \le |\Delta|, \\[8pt]
1, & \text{if } t \ge |\Delta| + 1.
\end{cases}
\]

The main result of this paper is stated as follows.

\begin{Theorem}\label{maintheorem}
Let $P_{\bf w}^{n+1}$ be an increasing weighted path. Then, for any $t \ge 1$, 
\[
\depth(S/I(P_{\bf w}^{n+1})^t)=d(\Delta, t).
\]
\end{Theorem}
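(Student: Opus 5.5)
We prove Theorem~\ref{maintheorem} by induction on $n$ and $t$, using short exact sequences on colon ideals and bounding depth from both sides. Write $I=I(P_{\bf w}^{n+1})$ and $\omega_i={\bf w}(e_i)$.

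\emph{Step 1: a depth reduction.} The basic tool is the exact sequence
\[
0\to S/(I^t:f)\to S/I^t\to S/(I^t,f)\to 0
\]
for a suitable monomial $f$. We take $f=(x_nx_{n+1})^{\omega_n}$ and more generally its powers, so that $(I^t:f^j)$ and $(I^t,f^j)$ become a chain of ideals. Because the path is increasing, the last edge carries the largest weight. We expect colon ideals of the form $I^t:(x_{n}x_{n+1})^{\omega_n}=I^{t-1}$, in the spirit of the known colon formulas for increasing weighted trees in \cite{HLTZ}. The quotients $(I^t,f^j)$ should reduce, after polarization or after splitting off the variable $x_{n+1}$, to ideals built from the shorter path $P^{n}_{\bf w}$ together with a "tail" variable. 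The depth lemma then gives
\[
\depth S/I^t\ge \min\{\depth S/I^{t-1},\ \depth(\text{shorter-path pieces})\}.
\]
The key structural point is that an edge $e_i$ with $\omega_i<\omega_{i+1}$ behaves like a separator. When $i\notin\Delta$, we expect $I^t$ to decompose, up to depth, as a sum of powers of ideals of the segments between consecutive strict increases, glued only mildly. This is why only $\Delta$ enters the formula.

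\emph{Step 2: constant-weight segments.} On a maximal run of equal weights $\omega$, the ideal is the $\omega$-th Frobenius-like power $x\mapsto x^\omega$ of an ordinary path edge ideal. Depth is preserved under the flat substitution $x_i\mapsto x_i^{\omega}$. We therefore import known depth formulas for powers of path edge ideals (as in \cite{BC,MTV}). This explains the mod-$3$ types: $\depth S/I(P_m)=\lceil m/3\rceil$. We then combine segments using the depth formula for sums of ideals in disjoint variable sets, $\depth(S/(J+K)^t)$, with the Hoa--Tam type formula
\[
\depth S/(J+K)^t=\min_{i+j=t-1,\,i+j=t}\{\cdots\}.
\]
Adjacent blocks at distance exactly $2$ share a vertex. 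Their interaction is the "gluing" that produces the parameters $a,b,c$. Optimizing over distributions of $t$ among blocks gives the piecewise function. Concretely, each power spent lowers the depth by $1$ on a type-$1$ residue, by $1/2$ on glued pairs, and by $1/3$ otherwise. The greedy order (spend on $a$ first, then $b$, then $c$) yields $d(\Delta,t)$.

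\emph{Step 3: the bounds.}
\begin{itemize}
\item For the upper bound, exhibit for each $t$ a monomial $u\notin I^t$ with $\depth S/(I^t:u)$ computable and equal to $d(\Delta,t)$. We choose $u$ as a product of edge generators placed greedily on the blocks in the order above. Then $I^t:u$ is $I$ plus variables, or some such polarizable ideal whose depth is the independence-type count $k(\Delta)+1-(\text{used})$. Here we use $\depth S/I^t\le\depth S/(I^t:u)$.
\item The limit value $1$ for $t\ge|\Delta|+1$ matches \cite{HLTZ}. The lower bound then follows from Step 1 by induction.
\end{itemize}

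\emph{Main obstacle.} The main obstacle is Step 2's interaction at gluing points: showing that blocks at distance $2$ of types $1$ and $2$ genuinely combine to save depth, while other adjacencies do not. This is a careful case analysis of the colon ideals at the shared vertex. I would first verify it on the smallest glued configurations by direct computation of $I^t:u$, and then set up an induction on the number of blocks in each $\Delta_i$.
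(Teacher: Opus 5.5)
There is a genuine gap. What you give is a heuristic outline, and the steps that carry the content of the theorem are either false as stated or left unspecified.

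\textbf{Step 1 uses the wrong edge, so the lower-bound induction has no working engine.} The identity $(I^t:(x_nx_{n+1})^{\omega_n})=I^{t-1}$ is guaranteed by Lemma~\ref{lem_colon_leaf} only when the leaf edge has the smallest weight at its inner vertex. At the heavy end of an increasing path it fails, already for $t=2$, whenever $\omega_{n-1}<\omega_n$. For example, take $\omega_{n-1}=1$ and $\omega_n=2$. Then $x_{n-1}^2(x_nx_{n+1})^2$ is divisible by $(x_{n-1}x_n)^2\in I^2$, so $x_{n-1}^2\in (I^2:f_n)\setminus I$.

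The paper works at the light end instead:
\begin{enumerate}
\item If $\omega_1<\omega_2$, it uses $f=(x_1x_2)^{\omega_1}$. Then $(I^t:f)=I^{t-1}$, and $(I^t,f)=(I^t,x_1^{\omega_1})\cap(I^t,x_2^{\omega_1})$ is handled by Lemma~\ref{lem_Depth_intersection}.
\item If $\omega_1=\omega_2$, it colons by the variable power $x_2^{\omega_1}$ (Lemma~\ref{lem:colon_x2}) and then peels off $x_1^{\omega_1}$ and $x_3^{\omega_1}$.
\end{enumerate}
This induction closes only because of the combinatorial inequalities comparing $d(\Delta,t)$ with $d(\Delta\setminus\{\min\Delta\},t-1)$, $d(\Delta\setminus\{\min\Delta\},t)+1$, $d(\Delta\cap[3,n],t)+1$ and $d(\Delta\cap[4,n],t)+1$ (Lemmas~\ref{D1}--\ref{D3}). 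That is where types and gluing actually enter the lower bound. Your ``shorter-path pieces'' are never identified, and nothing in your plan compares their depths with $d(\Delta,t)$.

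\textbf{Step 2 cannot produce the formula.} Consecutive constant-weight runs are not in disjoint sets of variables: two runs share the vertex where the weight jumps. In fact, two blocks of $\Delta$ at distance exactly $2$ are precisely two consecutive runs sharing such a vertex. So neither Lemma~\ref{sum1} nor a Hoa--Tam formula for $(J+K)^t$ applies. Your rule ``a type-$1$ residue costs $1$, a glued pair $1/2$, the rest $1/3$'' restates $d(\Delta,t)$; it does not derive it.

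\textbf{Step 3 states the intent of the upper bound but supplies none of the work.} You need explicit witnesses, and Remark~\ref{rem1} shows that plain products of edge generators give colon ideals with generators such as $x_1x_3x_4^2$ that are hard to control. The paper's upper bound rests on four ingredients:
\begin{enumerate}
\item the modified monomials $\tilde f_i=f_i\prod_{j=i+1}^{\ell}x_j^{\omega_j-1}$ along strictly increasing stretches;
\item a specific ordering of $\Delta$ through $A(\Delta)$, $B(\Delta)$, $C(\Delta)$, using the edges $e_{\mu_j+1}$;
\item the colon computation of Lemma~\ref{lem:colon_product_edges};
\item the extra factor $\prod_j x_j^{\eta_j}$ of Lemma~\ref{colon}, which reduces to an essentially squarefree ideal whose depth is bounded via \cite[Lemma 2.10]{LVZ}.
\end{enumerate}
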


\begin{Example}
Let 
\[
I = (x_1x_2,x_2x_3,x_3^2x_4^2,x_4^2x_5^2,x_5^2x_6^2,
x_6^3x_7^3,x_7^3x_8^3,x_8^3x_9^3,
x_9^4x_{10}^4,x_{10}^4x_{11}^4,
x_{11}^5x_{12}^5).
\]
Then 
\[
\Delta =\delta_1\cup \delta_2 \cup \delta_3\cup \delta_4,
\]
where $\delta_1= \{1\}, \delta_2=\{3,4\}, \delta_3=\{6,7\}, \delta_4=\{9\}$. Note that $\min(\delta_{i+1})-\max(\delta_i)= 2$ for $i=1,2,3$. Thus, $\Delta$ consists of one extended block containing two type 1 blocks and two type 2 blocks. Therefore, $a(\Delta) = 0$, $b(\Delta) = 3$, and $c(\Delta) = 0$. Consequently, the depth sequence $\depth(S/I^t)$ for $t = 1, \ldots, 7$ is 
\[
\{4,4,3,3,2,2,1\}.
\]
\end{Example}

    The paper is organized as follows: Section \ref{sec:prelim} introduces the basic facts that are used throughout the paper. In Section \ref{sec:results}, we prove Theorem \ref{maintheorem}.

\section{Preliminaries}
\label{sec:prelim}
	
In this section, we provide some basic facts that will be used throughout this paper. Detailed information can be found in \cite{BH} and \cite{HH}.
For a finitely generated graded $S$-module $L$, the depth of $L$ is defined as follows:
\[
\depth(L) = \min\{i \mid H_{\mm}^i(L) \ne 0\},
\]
where $H_{\mm}^{i}(L)$ denotes the $i$-th local cohomology module of $L$ with respect to $\mm$.

Let $G$ be a simple graph. For any subset $A$ of $V(G)$, $G[A]$ denotes the \emph{induced subgraph} of $G$ on the set $A$; that is, for any $u,v \in A$, $\{u,v\} \in E(G[A])$ if and only if $\{u,v\} \in E(G)$. For a vertex $v \in V(G)$, its \emph{neighborhood} and \emph{closed neighborhood} are defined by $N_G(v) := \{u \in V(G) \mid \{u,v\} \in E(G)\}$ and $N_G[v] := N_G(v) \cup \{v\}$, respectively. By abuse of notation, we also write $uv$ for the edge $\{u,v\}$ of $G$.

Let $G_{\mathbf{w}}$ be an edge-weighted graph. Any concept valid for $G$ extends 
naturally to $G_{\mathbf{w}}$. For example, for a subset $A \subseteq V(G)$, 
the \emph{induced subgraph} $G_{\mathbf{w}}[A]$ is the graph with underlying 
graph $G[A]$, where the weight of each edge in $G_{\mathbf{w}}[A]$ is 
identical to its weight in $G_{\mathbf{w}}$.

	\begin{Lemma}{\em (\cite[Lemma 2.2]{HT})}
			\label{sum1}
			Let $S_{1}=\KK[x_{1},\dots,x_{m}]$ and $S_{2}=\KK[x_{m+1},\dots,x_{n}]$ be two polynomial rings  over a field $\KK$. Let $S=S_1\otimes_\KK S_2$ and  $I\subset S_{1}$,
			$J\subset S_{2}$ be two nonzero homogeneous  ideals.  Then 
\[ 
\depth\bigl(S/(I+J)\bigr)=\depth(S_1/I)+\depth(S_2/J).
\]
		\end{Lemma}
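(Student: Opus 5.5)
We set $A=S_1/I$ and $B=S_2/J$, and note that $S/(I+J)\cong A\otimes_\KK B$ as graded $S$-modules. Both $A$ and $B$ are nonzero, since $I$ and $J$ are proper homogeneous ideals. The plan is to compute depth through Koszul homology, because the Koszul complex of a tensor product over a field splits as a tensor product of Koszul complexes.

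\textbf{Step 1: depth via Koszul homology.} For a finitely generated graded module $L$ over a polynomial ring $R=\KK[y_1,\dots,y_r]$ with maximal homogeneous ideal $\mathfrak n=(y_1,\dots,y_r)$, we use the standard depth sensitivity of the Koszul complex \cite{BH}:
\[
\depth L = r-\max\{k \mid H_k(\mathbf y;L)\neq 0\}.
\]

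\textbf{Step 2: the Koszul complex factors.} Write $\mathbf x'=x_1,\dots,x_m$ and $\mathbf x''=x_{m+1},\dots,x_n$. Since $A$ is an $S_1$-module and $B$ is an $S_2$-module, we have an isomorphism of complexes of $\KK$-vector spaces
\[
K(x_1,\dots,x_n;A\otimes_\KK B)\cong K(\mathbf x';A)\otimes_\KK K(\mathbf x'';B).
\]
This holds because the Koszul complex on a concatenated sequence is the tensor product of the two Koszul complexes. Here $\mathbf x'$ acts trivially on $B$ and $\mathbf x''$ acts trivially on $A$, so the tensor product over $S=S_1\otimes_\KK S_2$ reduces to a tensor product over $\KK$.

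\textbf{Step 3: Künneth over a field.} Since $\KK$ is a field, the Künneth formula gives
\[
H_k(\mathbf x;A\otimes_\KK B)\cong\bigoplus_{i+j=k} H_i(\mathbf x';A)\otimes_\KK H_j(\mathbf x'';B).
\]
A tensor product of two nonzero $\KK$-vector spaces is nonzero. Hence
\[
\max\{k\mid H_k(\mathbf x;A\otimes_\KK B)\ne0\}=\max\{i\mid H_i(\mathbf x';A)\ne 0\}+\max\{j\mid H_j(\mathbf x'';B)\ne 0\}.
\]

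\textbf{Step 4: conclusion.} Denote the two maxima on the right by $m-\depth(A)$ and $(n-m)-\depth(B)$, which is what Step 1 gives for $A$ and $B$. Applying Step 1 to $A\otimes_\KK B$ then yields
\[
\depth(S/(I+J)) = n-\bigl(m-\depth(A)\bigr)-\bigl((n-m)-\depth(B)\bigr)=\depth(S_1/I)+\depth(S_2/J).
\]

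The only point requiring care is Step 2. One must check that the differentials match under the isomorphism, including the sign conventions for the tensor product of complexes. This is a routine verification on the exterior algebra bases.

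An alternative route uses the Künneth formula for local cohomology,
\[
H^k_{\mm}(A\otimes_\KK B)\cong\bigoplus_{i+j=k} H^i_{\mm_1}(A)\otimes_\KK H^j_{\mm_2}(B),
\]
and then takes the smallest nonvanishing index. This fits the definition of depth used in the paper, but it requires more justification than the Koszul computation.
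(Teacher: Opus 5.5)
Your proof is correct. The paper gives no argument for this lemma; it is quoted from \cite{HT}. So there is nothing internal to compare against, and your Koszul computation serves as a valid self-contained proof.

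It is worth seeing what your argument actually computes. Since $H_i(\mathbf{y};L)\cong\Tor_i^R(\KK,L)$ when $\mathbf{y}$ is the full set of variables, your Step 1 is the Auslander--Buchsbaum formula $\depth L=r-\pd_R L$. Steps 2--3 say that $\Tor_\bullet^S(\KK,A\otimes_\KK B)\cong\Tor_\bullet^{S_1}(\KK,A)\otimes_\KK\Tor_\bullet^{S_2}(\KK,B)$. So you are really proving $\pd_S(S/(I+J))=\pd_{S_1}(S_1/I)+\pd_{S_2}(S_2/J)$. This is the familiar ``tensor the two minimal free resolutions over $\KK$'' argument in Koszul form, and it yields the graded Betti numbers of $S/(I+J)$ as a by-product.

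The local cohomology route you set aside is no harder. For $F\subseteq[m]$ and $G\subseteq[m+1,n]$ we have $(A\otimes_\KK B)_{x_Fx_G}\cong A_{x_F}\otimes_\KK B_{x_G}$. Hence the \v{C}ech complex of $A\otimes_\KK B$ on $x_1,\dots,x_n$ is the tensor product over $\KK$ of the \v{C}ech complexes of $A$ on $\mathbf{x}'$ and of $B$ on $\mathbf{x}''$. The same K\"unneth argument, using the smallest rather than the largest nonvanishing index, then gives the formula directly from the paper's definition of depth. That route makes your Step 1 unnecessary; Step 1 quietly combines Grothendieck's characterization of grade by local cohomology with depth sensitivity of the Koszul complex. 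It also describes every $H^k_{\mm}(S/(I+J))$, not only the first nonvanishing one.

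One phrasing point in Step 2: ``$\mathbf{x}'$ acts trivially on $B$'' is literally false, since no variable acts by zero. What you use is that $x_i$ acts on $A\otimes_\KK B$ as $x_i\otimes 1$ for $i\le m$ and as $1\otimes x_i$ for $i>m$. This gives $K(\mathbf{x};S)\cong K(\mathbf{x}';S_1)\otimes_\KK K(\mathbf{x}'';S_2)$, and then the base-change identity $(M_1\otimes_\KK M_2)\otimes_{S_1\otimes_\KK S_2}(N_1\otimes_\KK N_2)\cong(M_1\otimes_{S_1}N_1)\otimes_\KK(M_2\otimes_{S_2}N_2)$ finishes the step. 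Stated this way, no separate sign check is needed, because the Koszul complex is canonically the tensor product of the complexes $K(x_i;S)$.

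Finally, the hypothesis that $I$ and $J$ are nonzero plays no role for depth. Only properness matters, as you noted.
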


	\begin{Lemma}  {\em (\cite[Lemma 2.1]{HT})}
		\label{exact}
		Let $0\longrightarrow M\longrightarrow N\longrightarrow P\longrightarrow 0$ be an exact
		sequence of finitely generated graded $S$-modules. Then 
\begin{itemize}
			\item[(1)] $\depth(M)\geq \min\{\depth(N), \depth(P)+1\}$, the equality holds if $\depth(N) \neq \depth(P)$;
			\item[(2)] $\depth(N)\geq \min\{\depth(M), \depth(P)\}$, the equality holds if $\depth(P) \neq \depth(M)-1$.
		\end{itemize}
	\end{Lemma}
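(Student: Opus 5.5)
We argue directly from the characterization $\depth(L)=\min\{i \mid H_{\mm}^i(L)\neq 0\}$ recalled above, using the long exact sequence of local cohomology. Since $H_{\mm}^i(-)$ is a $\delta$-functor on graded $S$-modules, the short exact sequence $0\to M\to N\to P\to 0$ yields a long exact sequence
\[
\cdots \to H_{\mm}^{i-1}(P)\to H_{\mm}^i(M)\to H_{\mm}^i(N)\to H_{\mm}^i(P)\to H_{\mm}^{i+1}(M)\to\cdots.
\]
Write $m=\depth M$, $n=\depth N$, $p=\depth P$. Every claim then comes from reading off which terms in this sequence are forced to vanish or survive.

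For (1), we take $i<\min\{n,p+1\}$. Then $H_{\mm}^{i-1}(P)=0$ because $i-1<p$, and $H_{\mm}^i(N)=0$ because $i<n$. Exactness at $H_{\mm}^i(M)$ gives $H_{\mm}^i(M)=0$, so $m\ge\min\{n,p+1\}$.

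For the equality in (1), suppose $n\neq p$. There are two cases.
\begin{itemize}
\item If $n<p$, then $H_{\mm}^{n-1}(P)=0$. Hence $H_{\mm}^n(M)\to H_{\mm}^n(N)$ is injective, but we need surjectivity. Since $H_{\mm}^n(P)=0$ as well, the map $H_{\mm}^n(M)\to H_{\mm}^n(N)$ is onto. Because $H_{\mm}^n(N)\ne0$, we get $H_{\mm}^n(M)\neq0$, so $m\le n=\min\{n,p+1\}$.
\item If $n>p$, then $n\ge p+1$. Since $H_{\mm}^p(N)=0$, the map $H_{\mm}^p(P)\to H_{\mm}^{p+1}(M)$ is injective. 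Because $H_{\mm}^p(P)\neq 0$, we get $H_{\mm}^{p+1}(M)\ne0$, so $m\le p+1=\min\{n,p+1\}$.
\end{itemize}

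For (2), we take $i<\min\{m,p\}$. Then $H_{\mm}^i(M)=0$ and $H_{\mm}^i(P)=0$, and exactness of $H_{\mm}^i(M)\to H_{\mm}^i(N)\to H_{\mm}^i(P)$ gives $H_{\mm}^i(N)=0$. Hence $n\ge\min\{m,p\}$.

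For the equality in (2), assume $p\neq m-1$. There are again two cases.
\begin{itemize}
\item If $m\le p$, then $H_{\mm}^{m-1}(P)=0$, so $H_{\mm}^m(M)\hookrightarrow H_{\mm}^m(N)$. Since $H_{\mm}^m(M)\ne0$, this forces $n\le m$.
\item If $p<m$, then $p\le m-2$ by hypothesis. Hence $H_{\mm}^{p+1}(M)=0$, so $H_{\mm}^p(N)\to H_{\mm}^p(P)$ is surjective. Since $H_{\mm}^p(P)\ne0$, we get $H_{\mm}^p(N)\ne0$, so $n\le p$.
\end{itemize}
In both cases $n\le\min\{m,p\}$, which together with the inequality gives equality.

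The only delicate point is the case analysis for the equality statements: one must choose the right degree ($n$, $p$, $p+1$, or $m$) and the right exactness direction. The hypotheses $n\ne p$ and $p\ne m-1$ are exactly what kill the one adjacent term that could otherwise obstruct nonvanishing. The zero module, of depth $\infty$, is handled by the same argument with the obvious conventions.
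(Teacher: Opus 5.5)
Your proof is correct. You read the long exact sequence of local cohomology at degrees $n$, $p$, $p+1$ and $m$, and this gives both inequalities and both equality cases; in the case $n<p$ the remark about injectivity is superfluous, since only the surjectivity coming from $H^n_{\mm}(P)=0$ is used. The paper does not prove this lemma but simply cites \cite[Lemma 2.1]{HT}. Your argument is the standard proof behind that citation, and it is the natural one given that the paper defines depth through local cohomology.
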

		
\begin{Lemma}{\em (\cite[Corollary 1.3]{R})}\label{radup}
Let $I\subset S$ be a monomial ideal and $f \notin I$ be a monomial. Then 
\[
\depth( S / I) \leq \depth\bigl(S /(I: f)\bigr).
\]
\end{Lemma}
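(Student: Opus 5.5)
The plan is to reduce to the case where $f$ is a single variable, and then to compare multigraded local cohomology through Takayama's formula. Write $f=x^{\mathbf b}$ and choose a variable $x_j$ dividing $f$. We have $I:f=\bigl((I:x_j):(f/x_j)\bigr)$, and $f/x_j\notin I:x_j$ because $f\notin I$. So, by induction on $\deg f$, it suffices to prove
\[
\depth(S/I)\le \depth\bigl(S/(I:x_j)\bigr)
\quad\text{for every } j \text{ with } x_j\notin I .
\]
If $\deg f=1$, then $f$ is itself a variable and there is nothing more to reduce. Chaining the single-variable inequality along a factorization of $f$ into variables then gives the statement.

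For the single-variable case, we use Takayama's formula. For $\mathbf a\in\mathbb Z^n$ it gives
\[
H^i_{\mm}(S/I)_{\mathbf a}\cong \widetilde H_{i-|G_{\mathbf a}|-1}\bigl(\Delta_{\mathbf a}(I);\KK\bigr),
\]
where $G_{\mathbf a}=\{k\mid a_k<0\}$. The degree complex $\Delta_{\mathbf a}(I)$ consists of the faces $F\supseteq G_{\mathbf a}$ with $x^{\mathbf a}\notin IS[x_F^{-1}]$ (for the nonnegative part of $\mathbf a$), subject to the usual upper bounds $a_k<\rho_k$. We compare $\Delta_{\mathbf a}(I:x_j)$ with degree complexes of $I$, splitting on the sign of $a_j$.

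\emph{Case $a_j<0$.} Then $j\in G_{\mathbf a}\subseteq F$ for every face $F$. Since $x_j$ is inverted, $(I:x_j)S[x_F^{-1}]=IS[x_F^{-1}]$, and hence $\Delta_{\mathbf a}(I:x_j)=\Delta_{\mathbf a}(I)$.

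\emph{Case $a_j\ge 0$.} Here $x^{\mathbf a}\in (I:x_j)S[x_F^{-1}]$ holds exactly when $x^{\mathbf a+e_j}\in IS[x_F^{-1}]$, and moreover $G_{\mathbf a+e_j}=G_{\mathbf a}$. Hence $\Delta_{\mathbf a}(I:x_j)=\Delta_{\mathbf a+e_j}(I)$.

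In both cases, $H^i_{\mm}(S/(I:x_j))_{\mathbf a}\ne0$ forces $H^i_{\mm}(S/I)_{\mathbf a'}\ne 0$ for $\mathbf a'=\mathbf a$ or $\mathbf a'=\mathbf a+e_j$. Taking $i=\depth S/(I:x_j)$, which is the least index with nonzero local cohomology, we obtain $\depth S/I\le i$, which is the desired inequality.

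The main obstacle is the bookkeeping in Takayama's formula: the range conditions on $\mathbf a$ (the bounds $a_k<\rho_k$) and the exact convention for $\Delta_{\mathbf a}$. One must check that the shift $\mathbf a\mapsto\mathbf a+e_j$ stays in the admissible range. We expect this to hold because $\rho_j(I:x_j)\le\rho_j(I)-1$ whenever $x_j$ occurs in a generator of $I$. Alternatively, we can avoid the bounds by using the version of Takayama's formula in which $\Delta_{\mathbf a}$ is automatically acyclic outside that range.
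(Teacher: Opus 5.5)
Your proof is correct. The paper gives no argument for this lemma: it quotes it from Rauf [R, Corollary 1.3] and uses it as a black box, so your proposal is a genuinely self-contained route.

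The pieces all hold:
\begin{itemize}
\item The reduction to $f=x_j$ is sound, since $f/x_j\notin I:x_j$ and $I:f=(I:x_j):(f/x_j)$.
\item The two sign cases correctly give $\Delta_{\mathbf a}(I:x_j)=\Delta_{\mathbf a}(I)$ when $a_j<0$, and $\Delta_{\mathbf a}(I:x_j)=\Delta_{\mathbf a+e_j}(I)$ when $a_j\ge 0$.
\item In both cases $G_{\mathbf a}$ is the same on each side, so $H^i_{\mm}(S/(I:x_j))\neq 0$ forces $H^i_{\mm}(S/I)\neq 0$.
\end{itemize}

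Two remarks tighten the write-up.

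First, your ``main obstacle'' is not a real one. Define the degree complex by localization, as you essentially did: faces are $F\subseteq[n]\setminus G_{\mathbf a}$ with $x^{\mathbf a}\notin IS[x_{F\cup G_{\mathbf a}}^{-1}]$. Then $\dim_{\KK}H^i_{\mm}(S/I)_{\mathbf a}=\dim_{\KK}\widetilde H_{i-|G_{\mathbf a}|-1}(\Delta_{\mathbf a}(I);\KK)$ holds for every $\mathbf a\in\mathbb Z^n$, because it is just the degree-$\mathbf a$ strand of the \v{C}ech complex of $S/I$. So drop the clause about the bounds $a_k<\rho_k$ instead of mixing two conventions. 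Your estimate $\rho_j(I:x_j)\le\rho_j(I)-1$ is true but unnecessary. Also, calling the sets $F\supseteq G_{\mathbf a}$ ``faces'' conflates the \v{C}ech index sets with the faces; the shift $i-|G_{\mathbf a}|-1$ matches the faces $F\setminus G_{\mathbf a}$.

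Second, the induction on $\deg f$ is avoidable. For $f=x^{\mathbf b}$, let $\mathbf b'$ agree with $\mathbf b$ outside $G_{\mathbf a}$ and vanish on $G_{\mathbf a}$. The $x_k$ with $k\in G_{\mathbf a}$ are units in every relevant localization, so $\Delta_{\mathbf a}(I:f)=\Delta_{\mathbf a+\mathbf b'}(I)$ and $G_{\mathbf a+\mathbf b'}=G_{\mathbf a}$. This handles both of your cases at once.

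Compared with the citation, your route costs a page but gives a degreewise statement, $H^i_{\mm}(S/(I:f))_{\mathbf a}\cong H^i_{\mm}(S/I)_{\mathbf a+\mathbf b'}$. Hence $H^i_{\mm}(S/I)=0$ implies $H^i_{\mm}(S/(I:f))=0$ for every $i$, not only for $i<\depth(S/I)$.
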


\begin{Lemma}\label{lem_consequence_Depth_Lemma}Let $I$ be   a homogeneous ideal and $f$ be a nonzero homogeneous form of $S$. Then 
$$\depth (S/I) \ge \min \{\depth (S/(I:f)), \depth (S/(I,f))\}.$$        
\end{Lemma}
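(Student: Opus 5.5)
We prove Lemma \ref{lem_consequence_Depth_Lemma} with the standard multiplication-by-$f$ exact sequence and part (2) of Lemma \ref{exact}; no other input is needed. Set $d=\deg f$ and consider the sequence of finitely generated graded $S$-modules
\[
0\longrightarrow \bigl(S/(I:f)\bigr)(-d)\xrightarrow{\ \cdot f\ } S/I\longrightarrow S/(I,f)\longrightarrow 0 .
\]

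The first step is to check that this sequence is exact. The map $S\to S/I$, $g\mapsto fg+I$, has kernel exactly $I:f$, so it induces an injective homogeneous map of degree zero from $(S/(I:f))(-d)$ into $S/I$. Its image is $(fS+I)/I=(I,f)/I$, and the cokernel of the inclusion $(I,f)/I\hookrightarrow S/I$ is $S/(I,f)$.

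The second step is to apply Lemma \ref{exact}(2) with $M=(S/(I:f))(-d)$, $N=S/I$ and $P=S/(I,f)$. It gives
\[
\depth(S/I)\ge \min\{\depth(M),\depth(P)\}.
\]
A degree shift does not change depth, so $\depth(M)=\depth(S/(I:f))$, and the claimed inequality follows.

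Two degenerate cases need a remark. If $f\in I$, then $I:f=S$, so $M=0$, and we use the convention $\depth(0)=\infty$. In that case $(I,f)=I$ and the inequality is trivial. If instead $(I,f)=S$, then $P=0$ and $S/I\cong M$, so the inequality again holds with this convention. The only point requiring care is the exactness check in the first step, and that is routine.
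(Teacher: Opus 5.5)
Your proposal is correct and is essentially the paper's own proof: the paper also applies Lemma \ref{exact} to the short exact sequence $0 \to S/(I:f) \to S/I \to S/(I,f) \to 0$. You add the explicit degree shift by $\deg f$, name part (2) of Lemma \ref{exact} as the relevant clause, and handle the degenerate cases, which the paper leaves implicit.
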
 
\begin{proof}
    Applying Lemma \ref{exact} to the short exact sequence 
    $$0 \to S/(I:f) \to S/I \to S/(I,f) \to 0,$$
    we obtain the conclusion.
\end{proof}

\begin{Lemma}\label{lem_Depth_intersection}Let $I,J$ be nonzero homogeneous ideals of $S$. Then 
$$\depth (S/(I \cap J)) \ge \min \{\depth (S/I), \depth (S/J), \depth (S/(I+J)) + 1\}.$$        
\end{Lemma}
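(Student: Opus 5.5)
We apply Lemma \ref{exact} to the Mayer--Vietoris short exact sequence of graded $S$-modules
\[
0\longrightarrow S/(I\cap J)\longrightarrow S/I\oplus S/J\longrightarrow S/(I+J)\longrightarrow 0,
\]
where the first map sends $\bar f\mapsto(\bar f,\bar f)$ and the second sends $(\bar g,\bar h)\mapsto \overline{g-h}$. Exactness is standard. The first map is injective because $f\in I$ and $f\in J$ force $f\in I\cap J$. The second map is surjective. At the middle term, if $g-h\in I+J$, write $g-h=a-b$ with $a\in I$, $b\in J$. Then $f:=g-a=h-b$ satisfies $(\bar f,\bar f)=(\bar g,\bar h)$. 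All maps are homogeneous of degree $0$ because $I$ and $J$ are homogeneous.

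Next we apply Lemma \ref{exact}(1) with $M=S/(I\cap J)$, $N=S/I\oplus S/J$ and $P=S/(I+J)$. This gives
\[
\depth\bigl(S/(I\cap J)\bigr)\ge \min\{\depth(S/I\oplus S/J),\ \depth(S/(I+J))+1\}.
\]

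It remains to use $\depth(S/I\oplus S/J)=\min\{\depth(S/I),\depth(S/J)\}$. This holds because local cohomology commutes with finite direct sums, $H^i_{\mm}(S/I\oplus S/J)=H^i_{\mm}(S/I)\oplus H^i_{\mm}(S/J)$, so the first nonvanishing index is the smaller of the two. Substituting this into the displayed inequality gives the claim.

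The only possible subtlety is that $I+J$ might equal $S$. In that case $S/(I+J)=0$, whose depth is conventionally $\infty$, and the bound still holds trivially. In the graded setting with nonzero proper homogeneous ideals, $I+J\subseteq\mm$, so $S/(I+J)\neq 0$ and no such issue arises. None of these steps is a real obstacle; the proof is a direct application of the depth lemma to the Mayer--Vietoris sequence.
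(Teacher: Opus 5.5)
Your proposal is correct and follows the same approach as the paper: Lemma \ref{exact}(1) applied to the Mayer--Vietoris sequence $0 \to S/(I\cap J) \to S/I \oplus S/J \to S/(I+J) \to 0$. You add explicit verifications of exactness and of $\depth(S/I\oplus S/J)=\min\{\depth(S/I),\depth(S/J)\}$, which the paper leaves implicit.
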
 
\begin{proof}
    Applying Lemma \ref{exact} to the short exact sequence 
    $$0 \to S/(I \cap J) \to S/I \oplus S/J \to S/(I + J) \to 0,$$
    we obtain the conclusion.
\end{proof}

Recall that a weighted graph $G_{\bf w}$ is said to be \emph{nontrivially weighted} if there is at least one edge with weight greater than $1$; otherwise, it is said to be \emph{trivially weighted}. An edge $e \in E(G_{\bf w})$ is \emph{nontrivially weighted} if ${\bf w}(e) \ge 2$; otherwise, we say that $e$ has \emph{trivial weight}.

In this paper, we extend a lemma from \cite{LVZ}.

\begin{Lemma}\label{lem_colon_leaf} 
	Let $G_{\bf w}$ be an edge-weighted graph, and let $e = x_{n-1}x_n$ be a leaf edge of $G$, that is, $x_n$ has a unique neighbor, namely $x_{n-1}$. If ${\bf w}(e)\le {\bf w}(x_{n-1}x_j)$ for all $x_j\in N_G(x_{n-1})$, then for all $t \ge 2$, we have
\[
	\bigl(I(G_{\bf w})^t : (x_{n-1}x_n)^{{\bf w}(e)}\bigr)= I(G_{\bf w})^{t-1}.
\]    
\end{Lemma}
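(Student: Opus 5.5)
We prove the inclusion $I(G_{\bf w})^{t-1}\subseteq (I(G_{\bf w})^t : (x_{n-1}x_n)^{{\bf w}(e)})$ first and then the reverse. Write $I=I(G_{\bf w})$, $w={\bf w}(e)$ and $f=(x_{n-1}x_n)^{w}$.

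The inclusion $\supseteq$ is immediate, since $f\in I$ gives $f\cdot I^{t-1}\subseteq I^t$. Everything therefore rests on the reverse inclusion. Because $I^t$ and $f$ are monomial, the colon $(I^t:f)$ is generated by monomials. It suffices to show: if $m$ is a monomial with $mf\in I^t$, then $m\in I^{t-1}$.

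Suppose $mf = g\, u_1u_2\cdots u_t$, where $g$ is a monomial and each $u_k=(x_ax_b)^{{\bf w}(x_ax_b)}$ is a generator attached to some edge $x_ax_b$. We split into two cases.

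\emph{Case 1: some $u_k$ equals $f$.} Cancelling $f$ gives $m=g\prod_{l\neq k}u_l\in I^{t-1}$, and we are done.

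\emph{Case 2: no $u_k$ equals $f$.} Since $x_n$ is a leaf with unique neighbour $x_{n-1}$, the only generator involving $x_n$ is $f$. Hence in this case no $u_k$ involves $x_n$, and the factor $x_n^{w}$ of $mf$ must be absorbed by $g$ and $m$. Only $m$ can supply it, so $x_n^{w}\mid m$ ... wait, more carefully: $x_n^{w}\mid g$.

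Next, count the exponent of $x_{n-1}$ on the right. If at least one $u_k$ involves $x_{n-1}$, say $u_k=(x_{n-1}x_j)^{{\bf w}(x_{n-1}x_j)}$ with $x_j\in N_G(x_{n-1})$, $j\ne n$, then by hypothesis ${\bf w}(x_{n-1}x_j)\ge w$. Set
\[
u_k' = x_j^{{\bf w}(x_{n-1}x_j)}\,x_{n-1}^{{\bf w}(x_{n-1}x_j)-w}.
\]
Then $u_k = x_{n-1}^{w}u_k'$. We also have $x_n^w\mid g$, say $g=x_n^w g'$. Therefore
\[
mf=g\prod_l u_l = f\cdot g'\,u_k'\prod_{l\ne k}u_l ,
\]
which gives $m=g'u_k'\prod_{l\ne k}u_l\in I^{t-1}$.

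If no $u_k$ involves $x_{n-1}$, then $x_{n-1}^{w}$ divides $g$ as well, so $f\mid g$. Cancelling $f$ leaves $m=(g/f)\prod_l u_l\in I^t\subseteq I^{t-1}$.

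The main point of the argument, and the only place the hypothesis ${\bf w}(e)\le{\bf w}(x_{n-1}x_j)$ is used, is the exchange step in Case 2. There the weight inequality guarantees that $x_{n-1}^{w}$ can be split off from a neighbouring generator, leaving a monomial still divisible by $x_j^{{\bf w}(x_{n-1}x_j)}$ that contributes the missing factor. The rest is bookkeeping of exponents. The requirement $t\ge 2$ only ensures that $I^{t-1}$ is a genuine power, with $I^0=S$ not needed.
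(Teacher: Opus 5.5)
Your proof is correct and takes essentially the same approach as the paper. Either the generator $(x_{n-1}x_n)^{{\bf w}(e)}$ occurs among the $t$ factors and cancels, or (because $x_n$ is a leaf and ${\bf w}(e)\le{\bf w}(x_{n-1}x_j)$) $x_{n-1}^{{\bf w}(e)}$ splits off a neighbouring generator or all of $(x_{n-1}x_n)^{{\bf w}(e)}$ lies in the cofactor. The paper works with minimal generators $f/\gcd\bigl(f,(x_{n-1}x_n)^{{\bf w}(e)}\bigr)$ rather than arbitrary $m$ with $mf\in I^t$, which is equivalent; just delete the stray ``wait, more carefully'' aside, since $x_n^{w}\mid g$ is the correct claim and the one you use.
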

\begin{proof}
	Clearly, the left-hand side contains the right-hand side. It therefore suffices to prove that if $f$ is a minimal generator of $I(G_{\bf w})^t$, then 
	$$g = f / \gcd(f, (x_{n-1}x_n)^{{\bf w}(e)}) \in I(G_{\bf w})^{t-1}.$$
	To this end, we can write $f = f_1 \cdots f_t$, where each $f_i$ is a minimal generator of $I(G_{\bf w})$. If $x_n$ divides $f_j$ for some $j$, then $f_j = (x_{n-1}x_n)^{{ \bf w}(e)}$ since $x_n$ is a leaf of $G$. Hence,
\[
	g = f / \gcd(f, (x_{n-1}x_n)^{{\bf w}(e)}) = f_1 \cdots f_{j-1} f_{j+1} \cdots f_t \in I(G_{\bf w})^{t-1}.
\]
	
	Now assume that $x_n$ does not divide $f_j$ for any $1\le j\le t$. Since ${\bf w}(e)\le {\bf w}(x_{n-1}x_j)$ for all $x_j\in N_G(x_{n-1})$, $\gcd(f,(x_{n-1}x_n)^{{\bf w}(e)})$ is either $1$ or $x_{n-1}^{{\bf w}(e)}$. In the first case, $g = f \in I(G_{\bf w})^t \subseteq I(G_{\bf w})^{t-1}$ (as $t \ge 2$). In the second case, there exists some $j$ such that $f_j$ is divisible by $x_{n-1}$; hence, 
	$$g \text{ is divisible by } f_1 \cdots f_{j-1} f_{j+1} \cdots f_t \in I(G_{\bf w})^{t-1}.$$
	The conclusion follows.
\end{proof}

We use the following notation throughout the remainder of this paper. Let $m$ and $n$ be integers with $m \le n$. By convention, the notation $[m,n]$ denotes the set $\{m, m+1, \ldots, n\}$. In particular, if $m = n$, then $[m,n] = \{m\}$, and $[n]$ is defined as $[1,n]$. Let $P_{\bf w}^{n+1}$ be a nontrivially weighted path, and let $i$ and $j$ be integers in $[n+1]$. If $i \le j$, then $P_{\bf w}^{[i,j]}$ denotes the induced subpath of $P_{\bf w}^{n+1}$ on the vertex set $\{x_i, x_{i+1}, \ldots, x_j\}$; otherwise, $P_{\bf w}^{[i,j]}$ is defined to be the empty graph.
For a subset $A \subseteq [n]$, we define 
\[
\max(A) = \max\{a \mid a \in A\}
\quad \text{and} \quad
\min(A) = \min\{a \mid a \in A\}.
\]

\begin{Definition}\em
	Let $n$ be a positive integer and let $T \subseteq [n]$ be a nonempty subset. 	For integers $p, q$ with $1 \le p \le q \le n$, the interval $[p,q]$ is called a \emph{maximal block} of $T$ if the following three conditions hold:
	\begin{enumerate}
		\item $[p,q] \subseteq T$,
		\item $p - 1 \notin T$,
		\item $q + 1 \notin T$.
	\end{enumerate}
\end{Definition}

By the above definition, $T$ can be uniquely expressed as a disjoint union of maximal blocks:
\[
T = \delta_1 \sqcup \delta_2 \sqcup \cdots \sqcup \delta_s.
\]
This decomposition is referred to as the \emph{maximal block decomposition} of $T$.

\begin{Definition}
The type of a block $\delta = [p,q]$ is defined as the residue of $|\delta|$ modulo $3$, which takes values in $\{0,1,2\}$.
\end{Definition}


\begin{Definition}
 Let $\delta_1 = [p_1,q_1]$ and $\delta_2 = [p_2,q_2]$ be two blocks with $q_1 < p_2$. We say that $\delta_1$ and $\delta_2$ are \emph{gluable} if $|\delta_1| \not\equiv 0 \pmod{3}$, $|\delta_2| \not\equiv 0 \pmod{3}$, and $p_2 - q_1 = 2$. Let $T \subseteq [n]$ be a nonempty subset with maximal block decomposition $T = \delta_1 \sqcup \cdots \sqcup \delta_s$. A union $M = \delta_j \cup \cdots \cup \delta_k$ is called a \emph{maximal extended block} of $T$ if the following conditions are satisfied:
	\begin{enumerate}
		\item $\delta_\ell$ is of type $1$ or type $2$ for all $\ell = j, \ldots,k$,
		\item $\delta_\ell$ and $\delta_{\ell+1}$ are gluable for all $\ell = j, \ldots, k-1$,
		\item either $j=1$ or $\delta_{j-1}$ is not gluable to $\delta_j$,
		\item either $k=s$ or $\delta_k$ is not gluable to $\delta_{k+1}$.
	\end{enumerate}
\end{Definition}

We recall the following definition.
\begin{Definition}
Let $\Delta \subseteq [n]$ be a subset of indices, and suppose $\Delta$ admits the decomposition
\[
    \Delta = \Delta_1 \cup \cdots \cup \Delta_q,
\]
where each $\Delta_i$ is either a block of type $0$ or a maximal extended block formed by gluing together type $1$ and type $2$ blocks.

For $i = 1,\ldots,q$, let $t_i$ and $s_i$ denote the numbers of type $1$ and type $2$ blocks in $\Delta_i$, respectively, and let $r_i \in \{0,1\}$ be the residue of $t_i$ modulo $2$. We define
\[
a(\Delta) = \sum_{i=1}^q r_i, \quad b(\Delta) = \sum_{i=1}^q s_i + \sum_{i=1}^q \frac{t_i - r_i}{2}, \quad \text{and} \quad c(\Delta) = \frac{|\Delta| - a(\Delta) - 2b(\Delta)}{3}.
\]
Let $k(\Delta) = a(\Delta) + b(\Delta) + c(\Delta)$. We define the piecewise function $d(\Delta, t)$ as follows:
\[
d(\Delta, t) = 
\begin{cases} 
    k(\Delta) - t + 2, & \text{if } 1 \le t \le a(\Delta), \\[6pt]
    k(\Delta) + 1 - \left\lfloor \frac{t + a(\Delta) - 1}{2} \right\rfloor, & \text{if } a(\Delta) + 1 \le t \le a(\Delta) + 2b(\Delta), \\[8pt]
    k(\Delta) + 1 - \left\lfloor \frac{t + 2a(\Delta) + b(\Delta) - 1}{3} \right\rfloor, & \text{if } a(\Delta) + 2b(\Delta) + 1 \le t \le |\Delta|, \\[8pt]
    1, & \text{if } t \ge |\Delta| + 1.
\end{cases}
\]
\end{Definition}

\begin{Lemma}\label{D1}
Let $\Delta \subseteq [n]$ be a nonempty set of indices and let $\Gamma_1 = \Delta\setminus\{\min(\Delta)\}$. Then for all $t \ge 1$,
\[
d(\Delta,t) \le \min\{ d(\Gamma_1,t-1),\; d(\Gamma_1,t)+1 \}.
\]
\end{Lemma}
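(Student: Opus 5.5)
The plan is to turn $d(\Delta,t)$ into a statement about a multiset of "pieces", and then check how removing $\min(\Delta)$ changes that multiset.

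\emph{Step 1: reformulation.} Attach to $\Delta$ the multiset $\Pi(\Delta)$ of $a(\Delta)$ pieces of size $1$, $b(\Delta)$ pieces of size $2$ and $c(\Delta)$ pieces of size $3$. It has $k(\Delta)$ pieces of total size $|\Delta|$. For $1\le t\le |\Delta|$, let $N_\Delta(t)$ be the least $N$ such that the $N$ smallest pieces of $\Pi(\Delta)$ have total size $\ge t$. For $t\le 0$ put $N_\Delta(t)=0$, and for $t>|\Delta|$ put $N_\Delta(t)=k(\Delta)+1$. Checking the three middle ranges of the definition directly should give
\[
d(\Delta,t)=k(\Delta)+2-N_\Delta(t)\quad (1\le t\le |\Delta|),
\]
and $d(\Delta,t)=1$ for $t\ge |\Delta|+1$. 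For example, if $t-a=2j$ lies in the second range, then $\lfloor (t+a-1)/2\rfloor = a+j-1$, which matches $N=a+j$; the other ranges are verified the same way. For $t=0$ I use the convention $d(\Gamma_1,0)=k(\Gamma_1)+2$; this only matters if $t=1$ is allowed in the first inequality.

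\emph{Step 2: effect of deleting $\min(\Delta)$.} Let $\delta_1$ be the first maximal block, with $|\delta_1|=\ell$. Passing to $\Gamma_1$ shortens $\delta_1$ by one from the left, so its right endpoint, and hence its distance to $\delta_2$, is unchanged. Only the type of $\delta_1$ changes, by $0\to 2$, $1\to 0$ (or $\delta_1$ disappears if $\ell=1$), or $2\to 1$.

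I would go through these cases and track the extended block $\Delta_1$ containing $\delta_1$. Its invariants $(t_1,s_1,r_1)$ change, and gluing to $\delta_2$ may be created or broken: a type $0$ block becoming type $2$ may join the extended block containing $\delta_2$, and a type $1$ block becoming type $0$ detaches. In every case the claim to verify is that $\Pi(\Gamma_1)$ is obtained from $\Pi(\Delta)$ by one of three moves: deleting a piece of size $1$, replacing a $2$ by a $1$, or replacing a $3$ by a $2$.

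A sample check: $t_1=s_1=1$, and $\delta_1$ goes from type $2$ to type $1$. Then $(a,b)$ contribution goes from $(1,1)$ to $(0,1)$, so a $1$-piece is deleted. In parity-flip cases a $1$ and a $2$ may instead trade for a $2$ and a $1$ elsewhere; since the multiset is what matters, this still amounts to "shrink one piece by one". This bookkeeping, especially the gluing created in the $0\to 2$ case and the parity change of $t_1$, is the step I expect to be the main obstacle. It has to be done carefully, using $c=(|\Delta|-a-2b)/3$.

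\emph{Step 3: the inequalities.} Once Step 2 holds, both inequalities come from comparing $N$'s. Write $\Pi(\Gamma_1)$ as $\Pi(\Delta)$ with one piece $p$ shrunk by one. Then the $N$ smallest pieces of $\Gamma_1$ have total at least the total of the $N$ smallest pieces of $\Delta$ minus $1$, and at most that total.

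If $p$ shrinks to a positive size, then $k(\Gamma_1)=k(\Delta)$. This gives $N_{\Gamma_1}(t-1)\le N_\Delta(t)$, hence $d(\Gamma_1,t-1)\ge d(\Delta,t)$. It also gives $N_{\Gamma_1}(t)\le N_\Delta(t)+1$ (add one more piece), hence $d(\Gamma_1,t)+1\ge d(\Delta,t)$.

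If a $1$-piece is deleted, then $k(\Gamma_1)=k(\Delta)-1$. That piece is among the smallest, so $N_{\Gamma_1}(t-1)=N_\Delta(t)-1$ whenever $t\le a(\Delta)$, and $N_{\Gamma_1}(t-1)\le N_\Delta(t)-1$ in general. Also $N_{\Gamma_1}(t)\le N_\Delta(t)$. Both give the required bounds.

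Finally, the boundary ranges $t\ge |\Delta|$ (where values equal $1$), together with $t=1$ and $\Gamma_1=\emptyset$ (where $d(\emptyset,t)=1$ for $t\ge1$), are checked directly using $|\Gamma_1|=|\Delta|-1$.
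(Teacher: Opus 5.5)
Your proposal is correct. Its Step 2 is the same case analysis the paper carries out. The paper splits by the type of $\delta_1$, its gluability to $\delta_2$, and the parity of $t_1$. It records that $(a,b,c)$ passes from $\Delta$ to $\Gamma_1$ by exactly one of $(a+1,b-1,c)$, $(a-1,b,c)$, $(a,b+1,c-1)$. These are precisely your three moves: ``replace a $2$ by a $1$'', ``delete a $1$'', and ``replace a $3$ by a $2$''.

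The case you flag as the main obstacle is harmless. When a type-$0$ block becomes type $2$ and glues onto the extended block containing $\delta_2$, it raises $s$ by one without changing $t$ or its parity. So the move is still $(a,b+1,c-1)$, exactly as without gluing; the paper's Case 3 uses this tacitly without mentioning the new gluing. You should still write out the remaining cases of Step 2 in full, as the paper does, rather than leaving them at a sample check.

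Where you genuinely differ is the final step. Once the $(a,b,c)$ changes are known, the paper simply asserts that the inequality ``holds''. Done by hand, that means comparing four-range piecewise formulas whose breakpoints $a$ and $a+2b$ shift with each move. Your identity $d(\Delta,t)=k(\Delta)+2-N_\Delta(t)$ holds for all $t\ge 1$ under your conventions, and you have checked it correctly. It turns the comparison into one monotonicity statement about partial sums of the sorted pieces. That statement handles all three moves and all ranges of $t$ at once, including the boundary cases $t\ge|\Delta|$ and $\Gamma_1=\emptyset$.

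Your convention $d(\Gamma_1,0)=k(\Gamma_1)+2$ is genuinely needed. The paper states the lemma for all $t\ge 1$, yet $d(\cdot,0)$ is never defined there.
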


\begin{proof}
Let $\Delta = \delta_1 \cup \cdots \cup \delta_p$ be the decomposition of $\Delta$ into maximal blocks, where $\delta_1$ is the first maximal block.
Then the blocks of $\Gamma_1$ are exactly $\delta_1\setminus\{\min(\Delta)\},\delta_2,\ldots,\delta_p$ and $|\Gamma_1|=|\Delta|-1$. We distinguish three cases.

Case 1. Suppose that $\delta_1$ is of type $2$. If $\delta_2$ is not gluable to $\delta_1$, then direct computation yields
$a(\Delta) = a(\Gamma_1)-1$, $b(\Delta) = b(\Gamma_1)+1$, $c(\Delta) = c(\Gamma_1)$, and the desired inequality holds.

If $\delta_2$ is gluable to $\delta_1$, then
$a(\Delta) = a(\Gamma_1) - 1$, $b(\Delta) = b(\Gamma_1) + 1$, $c(\Delta) = c(\Gamma_1)$ if $t_1 \equiv 0 \pmod 2$, and
$a(\Delta) = a(\Gamma_1) + 1$, $b(\Delta) = b(\Gamma_1)$, $c(\Delta) = c(\Gamma_1)$ if $t_1 \equiv 1 \pmod 2$.
The conclusion follows similarly.

Case 2. Assume that $\delta_1$ is of type $1$. If $\delta_2$ is not gluable to $\delta_1$, then
$a(\Delta) = a(\Gamma_1)+1$, $b(\Delta) = b(\Gamma_1)$, $c(\Delta) = c(\Gamma_1)$, and the claim holds.

If $\delta_2$ is gluable to $\delta_1$, then
$a(\Delta) = a(\Gamma_1) - 1$, $b(\Delta) = b(\Gamma_1) + 1$, $c(\Delta) = c(\Gamma_1)$ if $t_1 \equiv 0 \pmod 2$, and
$a(\Delta) = a(\Gamma_1) + 1$, $b(\Delta) = b(\Gamma_1)$, $c(\Delta) = c(\Gamma_1)$ if $t_1 \equiv 1 \pmod 2$.
The result is again straightforward.

Case 3. Assume that $\delta_1$ is of type $0$. Then $\Delta_1=\delta_1$ and $t_1=0$. Thus,
$a(\Delta) = a(\Gamma_1)$, $b(\Delta) = b(\Gamma_1)-1$, $c(\Delta) = c(\Gamma_1)+1$.
The inequality follows immediately.
\end{proof}

\begin{Lemma}\label{D2}
Let $\Delta \subseteq [n]$ be a set of indices with $1\in\Delta$, and let $\Gamma_2 = \Delta \cap [3,n]$. Then for all $t \ge 1$, we have
\[
d(\Delta,t) \le d(\Gamma_2,t) + 1.
\]
\end{Lemma}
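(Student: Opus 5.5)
The plan is to reduce to Lemma \ref{D1} where possible, and otherwise to argue directly with the invariants $a,b,c$, as in the proof of Lemma \ref{D1}.

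\textbf{Case $2\notin\Delta$.} Here $\Gamma_2=\Delta\setminus\{1\}=\Gamma_1$, and $\min(\Delta)=1$. Lemma \ref{D1} gives $d(\Delta,t)\le d(\Gamma_1,t)+1=d(\Gamma_2,t)+1$ directly, so this case is done.

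\textbf{Case $2\in\Delta$.} Then $\delta_1=[1,q]$ with $q\ge 2$, and $\Gamma_2$ is obtained by deleting the two smallest elements $1,2$ of the first maximal block. Iterating Lemma \ref{D1} yields only $d(\Delta,t)\le d(\Gamma_1,t-1)\le d(\Gamma_2,t-2)$, which loses too much, so I will compute directly.

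First I record two elementary properties of $d(\Gamma,\cdot)$ from its definition:
\begin{itemize}
\item it is nonincreasing in $t$;
\item it drops by at most $1$ per step, so $d(\Gamma,t-1)\le d(\Gamma,t)+1$.
\end{itemize}

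Next I list how $(a,b,c)$ change when $\delta_1$ loses two elements. The type of $\delta_1$ shifts $0\to1$, $1\to2$, $2\to0$, with one exceptional subcase: if $|\delta_1|=2$, the block disappears entirely. The subcases are:
\begin{itemize}
\item $\delta_1$ of type $0$: it becomes type $1$. If it is now gluable to $\delta_2$, it joins that extended block and changes the parity of $t_1$. So either $a$ goes up by one and $c$ down by one, or $(a,b)\mapsto(a-1,b+1)$ together with $c\mapsto c-1$.
\item $\delta_1$ of type $1$: it becomes type $2$. Gluability to $\delta_2$ is unchanged. $s_1$ increases by one and $t_1$ decreases by one, so the parity of $t_1$ flips.
\item $\delta_1$ of type $2$ with $|\delta_1|\ge5$: it becomes type $0$ and detaches from the extended block. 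We lose one $s$ and one from $|\Delta|$, which forces $c$ to rise.
\item $|\delta_1|=2$: the first block is deleted. The glued chain starting at $\delta_1$ loses its first member. In every subcase the same bookkeeping as in Lemma \ref{D1} applies.
\end{itemize}
In each subcase I expect $|\Gamma_2|=|\Delta|-2$, with $(a,b,c)$ changing by one of a small list of vectors, e.g. $(+1,0,-1)$, $(-1,+1,-1)$, $(0,-1,0)$, $(+2,-1,-1)$, and so on.

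Then I prove a uniform comparison statement. Whenever $(a',b',c')$ arises from $(a,b,c)$ by one of these shifts with $|\Gamma'|=|\Delta|-2$, the inequality $d(\Delta,t)\le d(\Gamma',t)+1$ holds for all $t$. This amounts to comparing the piecewise formula on the three ranges $[1,a]$, $[a+1,a+2b]$, $[a+2b+1,|\Delta|]$, together with the constant tail $1$. At the breakpoints both formulas are linear or floor-linear, so it suffices to check the endpoints and the slopes: the Δ-side decreases at least as fast as the Γ-side on overlapping ranges. For $t\ge|\Gamma_2|+1$ the right side is $2$, and I will check that $d(\Delta,t)\le 2$ there.

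The main obstacle is this last verification. The breakpoints $a+2b$ and $|\Delta|$ differ between $\Delta$ and $\Gamma_2$, so the floors with different offsets, $\lfloor (t+a-1)/2\rfloor$ versus $\lfloor (t+2a+b-1)/3\rfloor$, must be compared across mismatched ranges. I would handle it by writing $d$ as $k+1-\phi(t)$, where $\phi$ counts "drops", and showing that $\phi_\Delta(t)\ge\phi_{\Gamma_2}(t)+(k(\Delta)-k(\Gamma_2))-1$ by induction on $t$.
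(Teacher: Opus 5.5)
Your case split matches the paper's. For $2\notin\Delta$ you reduce to Lemma~\ref{D1} as the paper does, and for $2\in\Delta$ you track $(a,b,c)$ by the type of $\delta_1$ and gluability, as in the paper's Case~2. However, your type-$2$ bookkeeping is wrong. Removing $1,2$ turns $\delta_1$ into a type-$0$ block of size $|\delta_1|-2$, or deletes it when $|\delta_1|=2$. Its extended block therefore loses one type-$2$ member, so $|\Delta|$ drops by $2$ and $2b$ drops by $2$. Hence $(a,b,c)\mapsto(a,b-1,c)$, and $c$ does not rise; this is the paper's subcase (i). Since $|\Gamma_2|=|\Delta|-2$, every shift $(\alpha,\beta,\gamma)$ of $(a,b,c)$ must satisfy $\alpha+2\beta+3\gamma=-2$, which already rules out your $(+2,-1,-1)$. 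The only shifts that occur are:
\begin{itemize}
\item $(0,-1,0)$ when $\delta_1$ has type $2$;
\item $(+1,0,-1)$ or $(-1,+1,-1)$ when $\delta_1$ has type $0$ or $1$, according as the extended block of $\Gamma_2$ containing $\delta_1\setminus\{1,2\}$ has an odd or an even number of type-$1$ blocks.
\end{itemize}

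The more serious gap is the comparison step, which is the actual content of the lemma (the paper also only asserts ``the claim follows''). You do not carry it out, and the criterion you propose is false: the $\Delta$-side need not decrease at least as fast as the $\Gamma_2$-side. For $\Delta=\{1,2,3\}$ and $\Gamma_2=\{3\}$ one gets $d(\Delta,t)=2,2,2,1,\dots$ and $d(\Gamma_2,t)=2,1,1,\dots$. So $d(\Gamma_2,\cdot)$ drops at $t=1\to2$ while $d(\Delta,\cdot)$ does not, and the inequality is an equality at $t=2,3$. A step-by-step induction on $t$ comparing increments of $\phi$ fails for the same reason.

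What makes the comparison easy is a global reading of the formula. The function $d(\Delta,\cdot)$ takes the values $k+1,k,\dots,2$ on consecutive runs of lengths $1$ ($a$ times), then $2$ ($b$ times), then $3$ ($c$ times), and is $1$ afterwards. Equivalently, $d(\Delta,t)=k(\Delta)+1-N_\Delta(t-1)$. Here $N_\Delta(s)$ is the largest number of elements with sum at most $s$ that can be chosen from the multiset consisting of $a(\Delta)$ ones, $b(\Delta)$ twos and $c(\Delta)$ threes.

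The lemma then reads $N_{\Gamma_2}(s)\le N_\Delta(s)+k(\Gamma_2)-k(\Delta)+1$, where $k(\Gamma_2)-k(\Delta)$ is $-1,-1,0$ for the three shifts respectively. Each shift is then a one-line exchange:
\begin{itemize}
\item For $(0,-1,0)$, the multiset of $\Gamma_2$ is contained in that of $\Delta$.
\item For $(-1,+1,-1)$, if a selection uses all $b(\Delta)+1$ twos, replace one by a one; this is possible because at most $a(\Delta)-1$ ones were used, and it keeps the count while lowering the sum.
\item For $(+1,0,-1)$, discard a one if all $a(\Delta)+1$ ones were used.
\end{itemize}
With these corrections your outline becomes a complete proof, and a more explicit one than the paper's.
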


\begin{proof}
Let $\Delta = \delta_1 \cup \cdots \cup \delta_p$ be the decomposition of $\Delta$ into maximal blocks, where $\delta_1$ denotes the first maximal block.
We proceed by considering the following cases.

Case 1. Suppose $|\delta_1| = 1$. Then $2\notin \Delta$, and $\Gamma_2=\Delta\setminus\{1\}=\Delta\setminus\{\min(\Delta)\}$. By Lemma \ref{D1}, the desired inequality holds immediately.

Case 2. Suppose $|\delta_1|\ge 2$. Then the blocks of $\Gamma_2$ are exactly $\delta_1\setminus\{1,2\},\delta_2,\ldots,\delta_p$, and $|\Gamma_2|=|\Delta|-2$. We further distinguish three subcases:

(i) If $\delta_1$ is of type $2$, then $a(\Delta) = a(\Gamma_2)$, $b(\Delta) = b(\Gamma_2)+1$, and $c(\Delta) = c(\Gamma_2)$, so the claim follows.

(ii) Assume $\delta_1$ is of type $1$. If $\delta_2$ is not gluable to $\delta_1$, then $a(\Delta) = a(\Gamma_2)+1$, $b(\Delta) = b(\Gamma_2)-1$, and $c(\Delta) = c(\Gamma_2)+1$, and the inequality holds.

If $\delta_2$ is gluable to $\delta_1$, then
\[
\begin{cases}
a(\Delta) = a(\Gamma_2) - 1,\; b(\Delta) = b(\Gamma_2),\; c(\Delta) = c(\Gamma_2)+1 & \text{if } t_1 \equiv 0 \pmod 2, \\
a(\Delta) = a(\Gamma_2) + 1,\; b(\Delta) = b(\Gamma_2)-1,\; c(\Delta) = c(\Gamma_2)+1 & \text{if } t_1 \equiv 1 \pmod 2.
\end{cases}
\]
The conclusion is clear in both subcases.

(iii) Assume $\delta_1$ is of type $0$. Then $\Delta_1=\delta_1$ and $t_1=0$. 
If $\min(\delta_2)-\max(\delta_1)\ge 3$, then $a(\Delta) = a(\Gamma_2)-1$, $b(\Delta) = b(\Gamma_2)$, and $c(\Delta) = c(\Gamma_2)+1$, so the claim is verified.

If $\min(\delta_2)-\max(\delta_1)=2$, then
\[
\begin{cases}
a(\Delta) = a(\Gamma_2) - 1,\; b(\Delta) = b(\Gamma_2),\; c(\Delta) = c(\Gamma_2)+1 & \text{if } t_2 \equiv 0 \pmod 2, \\
a(\Delta) = a(\Gamma_2) + 1,\; b(\Delta) = b(\Gamma_2)-1,\; c(\Delta) = c(\Gamma_2)+1 & \text{if } t_2 \equiv 1 \pmod 2.
\end{cases}
\]
The inequality holds in this case as well.
\end{proof}

\begin{Lemma}\label{D3}
Let $\Delta \subseteq [n]$ be a set of indices with $1\in\Delta$, and let $\Gamma_3 = \Delta \cap [4,n]$. Then for all $t \ge 1$,
\[
d(\Delta,t) \le d(\Gamma_3,t) + 1.
\]
\end{Lemma}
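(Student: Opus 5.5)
The plan is to follow the pattern of Lemmas \ref{D1} and \ref{D2}: compare the block structures of $\Delta$ and $\Gamma_3=\Delta\cap[4,n]$, compute how $a,b,c$ change, and then check the resulting inequality $d(\Delta,t)\le d(\Gamma_3,t)+1$ for each $t$. Let $\Delta=\delta_1\cup\cdots\cup\delta_p$ be the maximal block decomposition, with $1\in\delta_1$. We split the argument according to $|\delta_1|$.

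\emph{Small first blocks.} If $|\delta_1|=1$, then $2\notin\Delta$. Put $\Gamma_2=\Delta\cap[3,n]=\Delta\setminus\{1\}$. If $3\notin\Delta$, then $\Gamma_3=\Gamma_2$, and Lemma \ref{D2} (or Lemma \ref{D1}) gives the claim directly. If $3\in\Delta$, then $\Gamma_3=\Gamma_2\setminus\{\min\Gamma_2\}$. Here we cannot simply chain the lemmas, since that would lose $2$. Instead we compute directly: $\delta_1=\{1\}$ has type $1$, it is glued to $\delta_2$, and $\delta_2$ loses its first element in $\Gamma_3$. If $|\delta_1|=2$, then $\Gamma_3=\Gamma_2$ minus possibly nothing, because $3\notin\Delta$; so $\Gamma_3=\Gamma_2$ and Lemma \ref{D2} applies. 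If $|\delta_1|\ge 3$, the blocks of $\Gamma_3$ are $\delta_1\setminus\{1,2,3\},\delta_2,\ldots,\delta_p$ and $|\Gamma_3|=|\Delta|-3$.

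\emph{Main case $|\delta_1|\ge 3$.} Removing three elements from $\delta_1$ preserves its type. We then have the following subcases.
\begin{itemize}
\item If $|\delta_1|\ge 4$, the remaining block is nonempty, so all gluing relations are unchanged. Then $a(\Delta)=a(\Gamma_3)$, $b(\Delta)=b(\Gamma_3)$ and $c(\Delta)=c(\Gamma_3)+1$, so $k(\Delta)=k(\Gamma_3)+1$.
\item If $|\delta_1|=3$, the first block simply disappears. Since it has type $0$, it was never glued to anything, and again $c$ drops by exactly one.
\end{itemize}
In both subcases the thresholds satisfy $a(\Delta)=a(\Gamma_3)$ and $a(\Delta)+2b(\Delta)=a(\Gamma_3)+2b(\Gamma_3)$, while $|\Delta|=|\Gamma_3|+3$. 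Comparing the formulas piece by piece, $d(\Delta,t)=d(\Gamma_3,t)+1$ whenever $t\le|\Gamma_3|$. For $|\Gamma_3|<t\le|\Delta|$ we have $d(\Gamma_3,t)=1$. Since $d(\Delta,\cdot)$ is nonincreasing and the last-range formula at $t\ge a+2b+1$ gives at most $2$ in those final three values, we get $d(\Delta,t)\le 2$ there.

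\emph{Remaining subcase.} This is $\delta_1=\{1\}$, $3\in\Delta$, so $\delta_1$ and $\delta_2$ are gluable exactly when $\delta_2$ has type $1$ or $2$. We enumerate the type of $\delta_2$ and the parity of $t_1$, in the style of Lemma \ref{D2}(iii), and record the changes $(\Delta a,\Delta b,\Delta c)$. In each case $k(\Delta)-k(\Gamma_3)\le1$, and the thresholds shift compatibly, so the inequality can be checked on each range of $t$.

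The main obstacle is this bookkeeping when the removed elements break or alter a glued extended block. In particular, if $|\delta_2|=1$, removing it from $\Gamma_3$ may merge or re-split the chain $\Delta_1$, which changes $t_1$ and $r_1$. For this I would first try to reduce to the previous lemmas where possible. Otherwise, I would verify the piecewise inequality by noting that $d(\cdot,t)$ decreases by $1$, $1/2$ or $1/3$ per step in the successive ranges. A pointwise comparison then follows from comparing $k$ and the breakpoints, which are all determined by $(a,b,c)$.
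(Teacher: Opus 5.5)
Your case split is the paper's. The cases $|\delta_1|=1$ with $3\notin\Delta$ and $|\delta_1|=2$ reduce to Lemmas~\ref{D1} and~\ref{D2}. For $|\delta_1|\ge 3$, $a$ and $b$ are unchanged and $c(\Delta)=c(\Gamma_3)+1$, and your piecewise check of that case is correct (more explicit than the paper's).

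\textbf{The gap.} The gap is the subcase $\delta_1=\{1\}$, $3\in\Delta$. It is the only place where this lemma says something not already given by Lemmas~\ref{D1} and~\ref{D2}, and there you only assert that $k(\Delta)-k(\Gamma_3)\le 1$ and that the thresholds ``shift compatibly''. That is not an argument. The inequality must be checked pointwise against the floor formulas, which needs the exact change in $(a,b,c)$, and you never determine it. The obstacle you anticipate is also misplaced. If $|\delta_2|=1$, then $\delta_2=\{3\}$ has type $1$, $\Gamma_3$ simply begins with $\delta_3$, and the chain loses two type-$1$ blocks. So nothing merges and $r_1$ is unchanged, and the parity of $t_1$ plays no role in this subcase. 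Re-gluing actually occurs only when $\delta_2$ has type $0$.

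\textbf{The missing computation.} The paper fills this by splitting on the type of $\delta_2$. Here $\Gamma_3$ has blocks $\delta_2\setminus\{3\},\delta_3,\ldots,\delta_p$ and $|\Gamma_3|=|\Delta|-2$.

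If $\delta_2$ has type $2$, then $\delta_2\setminus\{3\}$ has type $1$ and stays glued to $\delta_3$ as before; the chain keeps $t_1$ type-$1$ blocks and loses one type-$2$ block. If $\delta_2$ has type $1$, then $\delta_2\setminus\{3\}$ is empty or of type $0$ and cuts off the rest of the chain, which has $t_1-2$ type-$1$ blocks. Either way,
\[
a(\Delta)=a(\Gamma_3),\quad b(\Delta)=b(\Gamma_3)+1,\quad c(\Delta)=c(\Gamma_3).
\]

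If $\delta_2$ has type $0$, then $\{1\}$ is an isolated type-$1$ extended block contributing $r=1$, and $\delta_2\setminus\{3\}$ has type $2$. Whether or not it becomes gluable to $\delta_3$, it adds exactly one to some $s_i$, so
\[
a(\Delta)=a(\Gamma_3)+1,\quad b(\Delta)=b(\Gamma_3)-1,\quad c(\Delta)=c(\Gamma_3)+1.
\]

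\textbf{The verification.} Write $a',b',c'$ for the invariants of $\Gamma_3$; the check is then routine.
\begin{itemize}
\item In the first case, $d(\Delta,t)=d(\Gamma_3,t)+1$ for $t\le a'+2b'+2$.
\item In the second case, this holds for $t\le a'+1$, and $d(\Delta,a'+2b')=c'+2=d(\Gamma_3,a'+2b')$.
\item For all remaining $t\le |\Gamma_3|$, the inequality reduces to $\lfloor (x+1)/m\rfloor\ge\lfloor x/m\rfloor$ with $m\in\{2,3\}$.
\item For $t=|\Gamma_3|+1,|\Gamma_3|+2$, one has $d(\Delta,t)=2$ and $d(\Gamma_3,t)=1$; beyond $|\Delta|$ both sides equal $1$.
\end{itemize}
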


\begin{proof}
Let $\Delta = \delta_1 \cup \cdots \cup \delta_p$ be the decomposition of $\Delta$ into maximal blocks, where $\delta_1$ denotes the first maximal block.
We consider the following cases.

Case 1. Suppose $|\delta_1| = 1$.
If $2,3\notin \Delta$, then $\Gamma_3=\Delta\setminus\{1\}=\Delta\setminus\{\min(\Delta)\}$. By Lemma~\ref{D1}, the desired inequality holds.

If $2\notin \Delta$ and $3\in\Delta$, then
the blocks of $\Gamma_3$ are exactly $\delta_2\setminus\{3\},\delta_3,\ldots,\delta_p$ and $|\Gamma_3|=|\Delta|-2$.
We distinguish three subcases.

(i) If $\delta_2$ is of type $2$, then $\delta_2\setminus\{3\}$ is of type $1$.
It follows that $a(\Delta) = a(\Gamma_3)$, $b(\Delta) = b(\Gamma_3)+1$, and $c(\Delta) = c(\Gamma_3)$. The claim holds.

(ii) If $\delta_2$ is of type $1$, then $a(\Delta) = a(\Gamma_3)$, $b(\Delta) = b(\Gamma_3)+1$, and $c(\Delta) = c(\Gamma_3)$. The result is straightforward.

(iii) If $\delta_2$ is of type $0$, then $a(\Delta) = a(\Gamma_3)+1$, $b(\Delta) = b(\Gamma_3)-1$, and $c(\Delta) = c(\Gamma_3)+1$. The conclusion follows.

Case 2. Suppose $|\delta_1| = 2$. Then $3\notin \Delta$ and $\Gamma_3=\Delta\setminus\{1,2\}$. By Lemma~\ref{D2}, the inequality holds immediately.

Case 3. Suppose $|\delta_1| \ge 3$.
Then the blocks of $\Gamma_3$ are exactly $\delta_1\setminus\{1,2,3\},\delta_2,\ldots,\delta_p$ and $|\Gamma_3|=|\Delta|-3$.
Since $\delta_1$ and $\delta_1\setminus\{1,2,3\}$ are of the same type, we have
$a(\Delta) = a(\Gamma_3)$, $b(\Delta) = b(\Gamma_3)$, and $c(\Delta) = c(\Gamma_3)+1$.
The desired estimate follows at once.
\end{proof}

\section{Main results}
\label{sec:results}

In this section, we provide the exact formula for the depth of an increasing weighted path $P_{\bf w}^{n+1}$.  

If $n \le 2$, then $P_{\bf w}^{n+1}$ is a star graph. By \cite[Theorems 3.1 and 3.3]{ZDCL}, we have $\depth(S/I(P_{\bf w}^{n+1})^t) = 1$ for all $t \ge 1$.

Thus, we assume $n \ge 3$. For simplicity, we set ${\bf w}_i = {\bf w}(e_i)$ for all $i \in [n]$. In this case, if $P_{\bf w}^{n+1}$ is a strictly increasing weighted path (i.e., ${\bf w}_i < {\bf w}_{i+1}$ for all $i \in [n-2]$), then by \cite[Proposition 3.8]{HLTZ}, $\depth(S/I(P_{\bf w}^{n+1})^t) = 1$ for all $t \ge 1$. From this point onward, we therefore assume that $P_{\bf w}^{n+1}$ is an increasing but not strictly increasing weighted path. That is, there exists some $i \in [n-2]$ such that ${\bf w}_i = {\bf w}_{i+1}$. Let $\Delta = \{i \in [n-2] \mid {\bf w}_i = {\bf w}_{i+1}\}$; then $\Delta \ne \emptyset$.

 In the following, unless otherwise specified, we will assume that $f_i=(x_ix_{i+1})^{{\bf w}_i}$ for all $i\in [n]$.

By \cite[Theorem 3.7]{HLTZ}, $\depth(S/I(P_{\bf w}^{n+1})^t) = 1$ for all $t \ge |\Delta| + 1$. Therefore, it suffices to determine $\depth(S/I(P_{\bf w}^{n+1})^t)$ for all $t \in [1, |\Delta|]$.

First, we establish the lower bound. We need the following lemma.

\begin{Lemma}\label{lem:colon_x2} Assume that $n \ge 2$. Let $P_{\bf w}^{n+1}$ be a weighted path such that ${\bf w}_1={\bf w}_2$. Denote by $I = I(P_{\bf w}^{n+1})$ and $J = I(P_{\bf w}^{[4,n+1]})$. Then for all $t \ge 1$ we have 
\[
(I^t : x_2^{{\bf w}_1}) = (x_1^{{\bf w}_1}, x_3^{{\bf w}_1})I ^{t-1} + J^t.
\]
\end{Lemma}
\begin{proof} Clearly the left hand side contains the right hand side. Now assume that 	$g =(\prod\limits_{j=1}^{t} f_{i_j}) / \gcd(\prod\limits_{j=1}^{t} f_{i_j},x_2^{{\bf w}_1})$ is a minimal generator of 
	$(I^t : x_2^{{\bf w}_1})$.
   We may assume that $i_1 \le \cdots \le i_t$. 
  If $i_1 >2$, then $g =\prod\limits_{j=1}^{t} f_{i_j}\in J^t + 
 x_3^{{\bf w}_1}I^{t-1}.$

If $i_1 = 1$, then $f_{i_1} = x_1^{{\bf w}_1} x_2^{{\bf w}_1}$ hence $g = x_1^{{\bf w}_1}\prod\limits_{j=2}^{t} f_{i_j} \in x_1^{{\bf w}_1}I^{t-1}$.

If $i_1 = 2$, then $f_{i_1} =	x_2^{{\bf w}_1} x_3^{{\bf w}_1}
$ hence, $g = x_3^{{\bf w}_1}\prod\limits_{j=2}^{t} f_{i_j}  \in x_3^{{\bf w}_1}I^{t-1}$. The conclusion follows.    
\end{proof}

\begin{Lemma}\label{lowerbounds}
$\depth (S/I(P_{\bf w}^{n+1})^t) \ge d(\Delta,t)$ for all $t \ge 1$.
\end{Lemma}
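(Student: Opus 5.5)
The plan is to prove the lower bound by induction on $n$ (the number of edges) and, for fixed $n$, on $t$. Lemmas \ref{D1}, \ref{D2} and \ref{D3} are exactly the combinatorial inequalities needed to compare $d(\Delta,t)$ with the values of $d$ attached to shorter paths. If $\Delta=\emptyset$, or $t\ge|\Delta|+1$, the bound reads $d(\Delta,t)=1$. This holds because the maximal ideal is not associated to $S/I^t$ for a path with a leaf edge of minimal weight; alternatively it follows from \cite{HLTZ} and the star/strictly increasing cases recalled above. For $t=1$ we use the known depth of a weighted path, since $\sqrt{I}$ is the ordinary path ideal and the polarization has the same depth; alternatively we run the same induction with $t=1$. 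We may also discard the prefix of the path before the first equality: if $\min\Delta=m>1$, then Lemma \ref{lem_colon_leaf} at the leaf $x_1$ and the splitting below reduce to the case $1\in\Delta$. So the main case is $1\in\Delta$, that is, ${\bf w}_1={\bf w}_2$.

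In this main case I use Lemma \ref{lem_consequence_Depth_Lemma} with $f=x_2^{{\bf w}_1}$:
\[
\depth(S/I^t)\ge\min\{\depth(S/(I^t:x_2^{{\bf w}_1})),\ \depth(S/(I^t,x_2^{{\bf w}_1}))\}.
\]
For the second term, $(I^t,x_2^{{\bf w}_1})=(x_2^{{\bf w}_1})+I(P_{\bf w}^{[3,n+1]})^t$ up to generators divisible by $x_2^{{\bf w}_1}$. By Lemma \ref{sum1} its depth is $1+\depth$ of the path ideal on $[3,n+1]$, with $x_1$ contributing to depth. The $\Delta$-set of that subpath is $\Delta\cap[3,n]$ shifted, that is, $\Gamma_2$. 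Induction then gives at least $d(\Gamma_2,t)+1\ge d(\Delta,t)$ by Lemma \ref{D2}; the free variable $x_1$ only helps.

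For the colon term, Lemma \ref{lem:colon_x2} gives $(x_1^{{\bf w}_1},x_3^{{\bf w}_1})I^{t-1}+J^t$. I would handle it with a second Depth Lemma step using $f=x_1^{{\bf w}_1}$, or by writing it as an intersection and applying Lemma \ref{lem_Depth_intersection}. Colon by $x_1^{{\bf w}_1}$ yields $I^{t-1}$ (plus harmless terms), because $x_1$ is a leaf and $x_1^{{\bf w}_1}I^{t-1}$ dominates. This should be checked in the spirit of Lemma \ref{lem_colon_leaf}. The outcome is a bound by $\depth(S/I^{t-1})\ge d(\Delta,t-1)$, which in turn is $\ge d(\Delta,t)$ by monotonicity of $d$. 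Adding $x_1^{{\bf w}_1}$ leaves $(x_1^{{\bf w}_1})+x_3^{{\bf w}_1}I(P^{[2,n+1]})^{t-1}+J^t$. Then I colon or add $x_3^{{\bf w}_1}$ again. Adding $x_3^{{\bf w}_1}$ kills everything through $e_3$ and leaves $J^t$ on $[4,n+1]$ with $x_2$ free. That gives $1+d(\Gamma_3,t)\ge d(\Delta,t)$ by Lemma \ref{D3}. The colon by $x_3^{{\bf w}_1}$ gives an ideal of the form $I(P^{[3,n+1]})^{t-1}$-like. Its comparison uses $\Gamma_1$ and Lemma \ref{D1} through the term $d(\Gamma_1,t-1)$, and the free variable $x_2$ recovers the $+1$ where needed.

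The main obstacle is making the colon computations exact. In particular, $(x_3^{{\bf w}_1}I^{t-1}+J^t):x_3^{{\bf w}_1}$ must be identified with a power of an edge ideal of a subpath, possibly with the weight of $e_3$ interacting when ${\bf w}_3={\bf w}_2$. The resulting ideals must also have $\Delta$-sets exactly $\Gamma_1,\Gamma_2,\Gamma_3$, so that Lemmas \ref{D1}--\ref{D3} apply verbatim. Here the increasing hypothesis is what makes $x_3^{{\bf w}_1}$ divide every generator involving $x_3$ beyond the first edges. I would first try to verify these colon identities generator by generator, as in Lemma \ref{lem:colon_x2}, and then assemble the inequalities.
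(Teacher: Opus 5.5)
Your plan is the paper's proof. It uses induction on $n$ and $t$. The case ${\bf w}_1={\bf w}_2$ goes through exactly the chain $x_2^{{\bf w}_1}$, Lemma \ref{lem:colon_x2}, $x_1^{{\bf w}_1}$, $x_3^{{\bf w}_1}$, closed off by Lemmas \ref{D1}--\ref{D3}. The case ${\bf w}_1<{\bf w}_2$ uses the leaf colon $(I^t:f_1)=I^{t-1}$ together with a splitting of $(I^t,f_1)$; your ``splitting below'' needs to be made explicit there, as the paper does, by writing it as $(I^t,x_1^{{\bf w}_1})\cap(I^t,x_2^{{\bf w}_1})$ and applying Lemma \ref{lem_Depth_intersection}, and the same intersection also handles $t=1$, where Lemma \ref{lem_colon_leaf} does not apply.

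Two further corrections. First, drop the $\sqrt{I}$/polarization justification for $t=1$, since one only has $\depth S/I\le\depth S/\sqrt{I}$ and the weighted depth can be strictly smaller (e.g.\ it is $1$ for strictly increasing weights); your fallback induction is the right argument. Second, the colon of $(x_1^{{\bf w}_1})+x_3^{{\bf w}_1}I(P_{\bf w}^{[2,n+1]})^{t-1}+J^t$ by $x_3^{{\bf w}_1}$ is exactly $(x_1^{{\bf w}_1})+I(P_{\bf w}^{[2,n+1]})^{t-1}$, because $J$ does not involve $x_3$. So there is no interaction with ${\bf w}_3$ and no free $x_2$, and $d(\Gamma_1,t-1)\ge d(\Delta,t)$ from Lemma \ref{D1} suffices on its own.
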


\begin{proof}
Let $I = I(P_{\bf w}^{n+1})$. We proceed by induction on $n$ and   $t$. The base case $n \le 2$ is trivial. We now assume $n \ge 3$ and consider two cases:

(I) Assume ${\bf w}_1 < {\bf w}_2$. First, let $t = 1$. Assume that  $L=I(P_{\bf w}^{[2,n+1]})$. Then 
$I =(L,x_1^{{\bf w}_1}) \cap (L,x_2^{{\bf w}_1})$. By Lemma \ref{lem_Depth_intersection}, we obtain
\[
\depth\bigl(\frac{S}{I}\bigr) \ge \min \Bigl\{\depth\Bigr(\frac{S}{(L,x_1^{{\bf w}_1})}\Bigr),\,\depth\Bigl(\frac{S}{(L,x_2^{{\bf w}_1})}\Bigr),\,
\depth\Bigl(\frac{S}{(L,x_1^{{\bf w}_1},x_2^{{\bf w}_1})}\Bigr)+1\Bigr\}.
\]
Note that the set of indices $\Delta$ is the same for $I$ and $L$. Hence, by induction, $\depth (S/(L,x_1^{{\bf w}_1})) \ge d(\Delta,t)$. The conclusion follows from the induction hypothesis on $n$ and Lemmas \ref{sum1}, \ref{D1}, and \ref{D2}.

Next, assume that  $t > 1$. Let $f = (x_1x_2)^{{\bf w}_1}$, then $(I^t:f) = I^{t-1}$ and $(I^t,f) = (I^t,x_1^{{\bf w}_1}) \cap (I^t ,x_2^{{\bf w}_1})$. By  similar arguments to the preceding paragraph, we deduce that $\depth (S/(I^t,f)) \ge d(\Delta,t)$. By Lemma \ref{lem_consequence_Depth_Lemma}, the induction hypothesis on $t$, we deduce
\[
\depth (S/I^t) \ge \min\left\{\depth \bigl(S/(I^t : f)\bigr), \depth\bigl (S/(I^t,f)\bigr)\right\} \ge d(\Delta,t).
\]

(II) Assume ${\bf w}_1 = {\bf w}_2$. For $t = 1$, we have $(I:x_2^{{\bf w}_1})=I(P_{\bf w}^{[4,n+1]})+(x_1^{{\bf w}_1},x_3^{{\bf w}_2})$ and $(I,x_2^{{\bf w}_1})=I(P_{\bf w}^{[3,n+1]})+(x_2^{{\bf w}_1})$. By the induction hypothesis on $n$ and Lemmas \ref{sum1}, \ref{D2}, and \ref{D3}, we have 
\[
\depth (S/(I:x_2^{{\bf w}_1}))=d(\Delta_1,1)+1\ge d(\Delta,1)
\]
and
\[
\depth (S/(I,x_2^{{\bf w}_1}))=d(\Delta_2,1)+1\ge d(\Delta,1),
\]
where $\Delta_1=\Delta\cap[4,n-2]$ and $\Delta_2=\Delta\cap[3,n-2]$. Applying Lemma \ref{lem_consequence_Depth_Lemma}, we obtain $\depth (S/I)\ge d(\Delta,1)$.

Now assume that  $t > 1$. By Lemma \ref{lem_consequence_Depth_Lemma}, we have 
\[
\depth (S/I^t) \ge \min \{\depth (S/(I^t,x_2^{{\bf w}_1})), \depth (S/(I^t:x_2^{{\bf w}_1}))\}.
\]
By induction and Lemma \ref{D1}, we have $\depth (S/(I^t,x_2^{{\bf w}_1})) \ge d(\Delta,t)$. Now, by Lemma \ref{lem:colon_x2}, we have 
$K = (I^t : x_2^{{\bf w}_1}) =(x_1^{{\bf w}_1}, x_3^{{\bf w}_1}) I^{t-1} + J^t$, where $J=I(P_{\bf w}^{[4,n+1]})$.

By Lemma \ref{lem_consequence_Depth_Lemma}, we have $\depth (S/K) \ge \min \{\depth (S/(K,x_1^{{\bf w}_1})), \depth (S/(K : x_1^{{\bf w}_1}))\}$. Now,
 $(K : x_1^{{\bf w}_1}) = I^{t-1}$ and $(K,x_1^{{\bf w}_1}) = (x_1^{{\bf w}_1})+x_3^{{\bf w}_1} L^{t-1} +J^{t}$. 
By induction on $t$, we have $\depth (S/(K : x_1^{{\bf w}_1})) \ge d(\Delta,t)$. 
 By induction  on $n$,  Lemmas \ref{exact},  \ref{D1} and \ref{D3}, we can deduce that 
\begin{align*}
\depth (S/(K,x_1^{{\bf w}_1})) &\ge \min \{\depth (S/ (K,x_1^{{\bf w}_1},x_3^{{\bf w}_1})), \depth (S/ ((K,x_1^{{\bf w}_1}) : x_3^{{\bf w}_1}))\}\\
& \ge d(\Delta,t).
\end{align*}
 The conclusion follows.
\end{proof}

\begin{Theorem}\label{monoincrease2}
$\depth \bigl(S/I(P_{\bf w}^{n+1})\bigr) = k(\Delta) + 1$, where $k(\Delta) = a(\Delta) + b(\Delta) + c(\Delta)$.
\end{Theorem}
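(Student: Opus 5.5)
Since $d(\Delta,1)=k(\Delta)+1$ in every case of the definition (if $a(\Delta)\ge 1$ the first branch gives $k(\Delta)+1$; if $a(\Delta)=0$ and $b(\Delta)\ge1$ the second branch gives $k(\Delta)+1-\lfloor 0/2\rfloor$; otherwise the third gives $k(\Delta)+1-\lfloor 0/3\rfloor$), Lemma \ref{lowerbounds} already yields $\depth(S/I)\ge k(\Delta)+1$. So it remains to prove the upper bound $\depth(S/I)\le k(\Delta)+1$. I will do this by induction on $n$, using Lemma \ref{radup}: exhibit a monomial $f\notin I$ such that $(I:f)$ splits as a sum of ideals in disjoint variables whose depths are known by induction. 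The aim is to peel off edges from the left end so that the count of $a,b,c$ decreases by exactly one in each step.

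\textbf{Base of the reduction.} Observe that $\depth(S/I)$ for $t=1$ depends only on the radical-free structure of the path. Colon by powers of variables lets one replace $(x_ix_{i+1})^{{\bf w}_i}$ appropriately. If ${\bf w}_1<{\bf w}_2$, colon by $x_2^{{\bf w}_1}$ is not helpful; instead I will colon by $x_1^{{\bf w}_1}$. This gives $(I:x_1^{{\bf w}_1})=(x_2^{{\bf w}_1})+I(P^{[2,n+1]}_{\bf w})$ with $x_1$ free, and then a further colon by a suitable power of $x_2$ to kill $x_2$ and isolate the remaining path. The index set $\Delta$ is unchanged when the first edge is strictly lighter, so this case reduces to $P^{[2,n+1]}_{\bf w}$ with $x_1$ contributing $+1$ but $x_2$ being regular after... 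Rather, the cleaner route is: when $1\notin\Delta$, the free variable $x_1$ appears only in the generator $f_1$. Coloning by $x_1^{{\bf w}_1}$ and then taking $(\cdot : x_2^{{\bf w}_2-{\bf w}_1})$-type colons, I expect to land at $(x_2^{{\bf w}_1})+I(P^{[3,n+1]})$ plus $x_1$ free, which has depth $1+\depth$ of a shorter path. To make the counts match, I instead compare with $\Gamma_1$-type index shifts using Lemma \ref{D1}–\ref{D3} in reverse; the correct statement needed is equality of $k$ under these removals in the relevant cases, which I will verify case by case as in those lemmas' proofs.

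\textbf{Main case $1\in\Delta$.} Let $\delta_1$ be the first maximal block. If $|\delta_1|\ge3$, colon by $f=(x_2x_3)^{{\bf w}_1}\cdot$(nothing else) is too crude; I would use $f=x_2^{{\bf w}_1}x_3^{{\bf w}_1}$-like monomials chosen so that $(I:f)=(x_1^{{\bf w}_1},x_3^{{\bf w}_1}, x_4^{{\bf w}_3})+I(P^{[5,n+1]})$, giving depth $1+\depth(S'/I(P^{[5,n+1]}))$ (variable $x_2$ free), and $k$ drops by $1$ by Case 3 of Lemma \ref{D3} (a type-preserving removal of three indices). For $|\delta_1|=1$ or $2$ the choice depends on gluability with $\delta_2$: for type 2 blocks, colon by $x_2^{{\bf w}_1}$ gives $(x_1^{{\bf w}_1},x_3^{{\bf w}_1})+I(P^{[4,n+1]})$, $x_2$ free, and $b$ drops by one as in Lemma \ref{D2}(i); for glued type-1/type-2 chains the parity $t_1$ dictates whether to cut after two or three indices, mirroring the case analysis of Lemmas \ref{D2},\ref{D3}.

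\textbf{Obstacle.} The delicate step is the combinatorial bookkeeping: for each configuration of the first extended block one must choose the cut (remove $2$ or $3$ initial indices) so that $k$ decreases by exactly one, matching the $+1$ from the freed variable, while the colon ideal genuinely decouples (no residual generator linking $x_3$ or $x_4$ to the remaining path). I expect the glued type-1 chain with $t_1$ odd to be hardest, where neither naive cut decreases $k$ by exactly one and one may need to cut at the gluing gap instead.
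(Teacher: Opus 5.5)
Your lower bound is fine: $d(\Delta,1)=k(\Delta)+1$ in every branch, so Lemma \ref{lowerbounds} applies. The upper bound, however, has a genuine gap, beginning with the case $1\notin\Delta$.

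There ${\bf w}_1<{\bf w}_2$, so $(I:x_1^{{\bf w}_1})=(x_2^{{\bf w}_1})+I(P_{\bf w}^{[3,n+1]})$ with $x_1$ free. By induction its depth is exactly $2+k(\Delta\cap[3,n-2])$. If $2\notin\Delta$ this equals $k(\Delta)+2$. Even when $\min(\Delta)=2$ it can overshoot: for ${\bf w}=(1,2,2,2,3)$ we have $\Delta=\{2,3\}$ and $k(\Delta)=1$, yet $(I:x_1)=(x_2,x_3^2x_4^2,x_4^2x_5^2,x_5^3x_6^3)$ has depth $3$. Lemma \ref{radup} only bounds $\depth(S/I)$ by the depth of the colon, so no rebookkeeping with Lemmas \ref{D1}--\ref{D3} ``in reverse'' can rescue this; the monomial itself is wrong.

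The case $1\in\Delta$ is not actually carried out either. Your monomial $x_2^{{\bf w}_1}x_3^{{\bf w}_1}$ equals $f_2\in I$, so that colon is the unit ideal. The ideal you write for $|\delta_1|\ge 3$ would delete four indices of $\Delta$, not three. And the glued case you flag as the main obstacle is left open.

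The missing observation is that a single colon handles every configuration. Let $p=\min(\Delta)$. Then
$(I:x_{p+1}^{{\bf w}_p})=I(P_{\bf w}^{[1,p]})+(x_p^{{\bf w}_p},x_{p+2}^{{\bf w}_p})+I(P_{\bf w}^{[p+3,n+1]})$. The pieces contribute as follows:
\begin{itemize}
\item The part in $x_1,\dots,x_p$ has depth $0$: the weights satisfy ${\bf w}_1<\cdots<{\bf w}_p$, so $\prod_{k\le p}x_k^{{\bf w}_k-1}$ is a socle monomial.
\item $x_{p+1}$ is free, and $x_{p+2}$ contributes depth $0$.
\item The last part has depth $k(\Delta\cap[p+3,n-2])+1$ by induction.
\end{itemize}
The case analysis in the proof of Lemma \ref{D3}, translated to start at $p$, shows $k(\Delta\cap[p+3,n-2])=k(\Delta)-1$ in every case, including glued type-$1$ chains of either parity. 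So the colon has depth exactly $k(\Delta)+1$, and the obstacle you anticipate does not occur. As you suspected, you need these equalities from the proof, since the stated inequality of Lemma \ref{D3} points the other way. For $p=1$ this is exactly your type-$2$ colon by $x_2^{{\bf w}_1}$; note it always removes $\Delta\cap[1,3]$, so it is governed by Lemma \ref{D3}, not Lemma \ref{D2}.

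The paper instead peels from the right end. With $m=\max(\Delta)$ it colons by $x_{n+1}^{{\bf w}_n-1}\prod_{k=m+2}^{n}x_k^{{\bf w}_{k-1}}$. This collapses the strictly increasing tail to pure powers plus one factor $x_n^{{\bf w}_n-{\bf w}_{n-1}}x_{n+1}$ of depth $1$. What remains is $P_{\bf w}^{[1,m]}$ with index set $\Delta\cap[m-3]$, handled by the mirror-image identity for $k$.
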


\begin{proof}
Let $I = I(P_{\bf w}^{n+1})$. By Lemma \ref{lowerbounds}, it suffices to show that $\depth (S/I) \le k(\Delta) + 1$. We proceed by induction on $n$. The base case $n \le 2$ is trivial, so we assume $n \ge 3$.

Let
\[
f = x_{n+1}^{{\bf w}_n - 1}\biggl(\prod_{k=m+2}^{n}x_k^{{\bf w}_{k-1}}\biggr),
\]
where $m = \max(\Delta)$. Then
\[
(I : f) = I(P_{\bf w}^{[1,m]}) + \bigl(x_{m+1}^{{\bf w}_{m+1}}, x_{m+2}^{{\bf w}_{m+2} - {\bf w}_{m+1}}, x_{m+3}^{{\bf w}_{m+3} - {\bf w}_{m+2}}, \ldots,
 x_{n-1}^{{\bf w}_{n-1} - {\bf w}_{n-2}}, x_{n}^{{\bf w}_{n} - {\bf w}_{n-1}}x_{n+1}\bigr).
\]
By Lemma \ref{radup}, it suffices to prove that $\depth(S/(I:f)) \le k(\Delta) + 1$. We consider two cases:

(i) If $m \in [1,3]$, then $k(\Delta) = 1$. By Lemma \ref{sum1}, we have $\depth(S/(I:f)) = 2 = k(\Delta) + 1$.

(ii) If $m \ge 4$, let $R = \KK[x_1, \ldots, x_m]$ and $\Delta' = \{i \in [m-3] \mid {\bf w}_i = {\bf w}_{i+1}\}$. By the induction hypothesis on $n$ and Lemma \ref{sum1}, we have $\depth(R/I(P_{\bf w}^{[1,m]})) \le k(\Delta') + 1$. Moreover, Lemma \ref{D3} gives $k(\Delta') \le k(\Delta) - 1$. Combining these results with Lemma \ref{sum1}, we deduce
\[
\depth\bigl(S/(I:f)\bigr) = \depth\bigl(R/I(P_{\bf w}^{[1,m]})\bigr) + 1 \le k(\Delta) + 1.
\]

This completes the proof.
\end{proof}

In the following, we consider the case where $t\ge 2$. If $|\Delta|=1$, then,   by \cite[Theorem 3.7]{HLTZ},
 $\depth(S/I(P_{\bf w}^{n+1})^t) = 1$ for all $t \ge 2$. Hence, we always assume that $|\Delta|\ge 2$.
We will prove that $\depth(S/I(P_{\bf w}^{n+1})^t) \le d(\Delta, t)$. To this end, we construct a monomial $f$ such that $\depth (S/I^t) \le \depth \bigl(S/(I^t:f)\bigr) = d(\Delta,t)$. The construction of $f$ relies on the following definition.

Let $\Delta = \delta_1 \cup \cdots \cup \delta_q$ be the decomposition of $\Delta$ into maximal blocks. We define a partition
\[
\Delta = A(\Delta) \sqcup B(\Delta) \sqcup C(\Delta).
\]
It suffices to recursively define $A(\Delta)$ and $B(\Delta)$; the set $C(\Delta)$ is then given by $C(\Delta) = \Delta \setminus (A(\Delta) \cup B(\Delta))$.

For convenience, we write each block as $\delta_j = [m_j,n_j]$ for $j = 1,\ldots,q$.

\medskip
\noindent
\textbf{Base case.} 
If $\Delta$ consists of a single block $\delta_1$, then
\[
A(\Delta)=
\begin{cases}
\{n_1\}, & \text{if $\delta_1$ is of type $1$},\\
\emptyset, & \text{otherwise},
\end{cases}
\;\;
B(\Delta)=
\begin{cases}
\{n_1-1,n_1\}, & \text{if $\delta_1$ is of type $2$},\\
\emptyset, & \text{otherwise}.
\end{cases}
\]

\medskip
\noindent
\textbf{Recursive step.}
Let $\Delta' = \delta_2 \cup \cdots \cup \delta_{q}$.

\begin{itemize}
\item If $\delta_1$ is of type $0$ or type $2$, then
\[
A(\Delta) = A(\Delta'), \qquad 
B(\Delta) = B(\Delta') \cup B(\delta_1).
\]

\item If $\delta_1$ is of type $1$ and not gluable to $\delta_{2}$, then
\[
A(\Delta) = A(\Delta') \cup A(\delta_1), \qquad 
B(\Delta) = B(\Delta').
\]

\item If $\delta_1$ is of type $1$ and gluable to $\delta_{2}$, set
\[
\Gamma = \delta_2' \cup \cdots \cup \delta_{q-1} \cup \delta_{q},
\quad \text{where } 
\delta_{2}' = \delta_{2} \setminus \{m_{2}\}.
\]
We then define
\[
A(\Delta) = A(\Gamma), \qquad 
B(\Delta) = B(\Gamma) \cup \{n_{1},m_2\}.
\]
\end{itemize}
    
  It is easy to see that the cardinalities of $A(\Delta)$, $B(\Delta)$, and $C(\Delta)$ are related to the previously defined $a(\Delta)$, $b(\Delta)$, and  $c(\Delta)$ as follows:  
  $|A(\Delta)|=a(\Delta)$, $|B(\Delta)|=2b(\Delta)$ and $|C(\Delta)|=3c(\Delta)$.

\medskip
We illustrate the construction with the following example.

\begin{Example}
Let 
\[
\Delta = \{1\} \cup \{3,4\} \cup \{6\} \cup \{8,9\} 
\cup \{11,12,13\} \cup \{15\}.
\]

The first block $\{1\}$ is of type $1$ and gluable to the subsequent block $\{3,4\}$. Thus, $\{1,3\} \subseteq B(\Delta)$, and we proceed with the remaining blocks $\{4\} \cup \{6\} \cup \{8,9\} \cup \{11,12,13\} \cup \{15\}$.

The block $\{4\}$ is of type $1$ and gluable to the subsequent block $\{6\}$, so $\{4,6\} \subseteq B(\Delta)$.

The block $\{8,9\}$ is of type $2$, hence $\{8,9\} \subseteq B(\Delta)$.

Next, the block $\{11,12,13\}$ is of type $0$, so $\{11,12,13\}$ contributes to $C(\Delta)$.

Finally, the remaining singleton block $\{15\}$ contributes to $A(\Delta)$.

We therefore have
\[
A(\Delta)=\{15\}, \qquad 
B(\Delta)=\{1,3,4,6,8,9\}, \qquad 
C(\Delta)=\{11,12,13\}.
\]
\end{Example}

Now, we label the elements of $\Delta$ by first listing the elements of $A(\Delta)$, followed by those of $B(\Delta)$, and finally those of $C(\Delta)$. Each set is arranged in descending order. We denote this labeling by $\mu_1, \ldots, \mu_{|\Delta|}$.  For the example above, we have
\[
\{\mu_1,\ldots,\mu_{10}\}=
\{15,9,8,6,4,3,1,13,12,11\}.
\]

\begin{Remark}\label{rem1}
Let $t \in [2, |\Delta|+1]$, and let $g_t$ denote the product of the monomials corresponding to the weighted edges $e_{\mu_j + 1}$ for all $j = 1, \ldots, t-1$. By Lemma \ref{radup}, it suffices to show that $\depth\bigl(S/(I(P_{\bf w}^{n+1})^t : g_t)\bigr) \le d(\Delta, t)$. However, the colon ideal $\bigl(I(P_{\bf w}^{n+1})^t : g_t\bigr)$ is often too complex to directly verify the depth formula.

For example, let ${\bf w} = (1,1,2,2,2,2,4,4,6,6,6)$. Here, $\Delta = \{1, 3, 4, 5, 7, 9\}$. If we set $g_2 =f_2$, the resulting colon ideal is $(I^2 : g_2) = I + (x_1x_3x_4^2)$. Note that the generator $x_1x_3x_4^2$ has support on three variables, which makes it difficult to compute the depth of this ideal. However, if we multiply $g_2$ by $x_3$, the colon ideal becomes more tractable, and its depth is easier to compute. We therefore adjust the definition of the product of edges as follows.
\end{Remark}

\begin{Definition}\label{def5}
Let $P_{\bf w}^{n+1}$ be an increasing weighted path. For each $i\in[n-1]$, let $f_i = (x_ix_{i+1})^{{\bf w}_i}$. We define the modified monomial $\tilde{f}_i$ by
\[
\tilde{f}_i =
\begin{cases}
f_i, & \text{if ${\bf w}_i = {\bf w}_{i+1}$},\\
f_i \cdot \prod_{j=i+1}^\ell x_j^{{\bf w}_j - 1}, & \text{otherwise},
\end{cases}
\]
where $\ell$ denotes the largest index such that ${\bf w}_i < {\bf w}_{i+1} < \cdots < {\bf w}_\ell$.
Furthermore, let
\[
\Delta = \bigl\{\, i \in [n-2] \mid {\bf w}_i = {\bf w}_{i+1} \,\bigr\}.
\]
Let $\mu_1, \ldots, \mu_{|\Delta|}$ be the labeling of the elements of $\Delta$ as defined above. For each integer $t$ with $2 \le t \le |\Delta|+1$, we define the monomial $g_t \in S$ by
\[
g_t = \prod_{i=1}^{t-1} \tilde{f}_{\mu_i+1}.
\]
\end{Definition}

\begin{Remark}
Continuing with the example in Remark \ref{rem1}, we see that $g_3 = (x_2x_3^2) \cdot (x_{10}x_{11})^6$ and $g_4 = (x_2x_3^2) \cdot (x_{10}x_{11})^6 \cdot \bigl((x_8x_9)^4 x_9^5\bigr)$.
\end{Remark}

To compute the depth of the quotient ring of higher powers of $I(P_{\bf w}^{n+1})$, we use techniques derived from \cite{HHV} and \cite{LVZ} to compute the colon ideal.
For a trivially weighted path, the colon ideal is obtained by bipartite completion of the closed neighborhood of edges. For a subset $\{e_i \mid i \in A\}$ of edges of an increasing weighted path $P_{\bf w}^{n+1}$, we have the monomial $f_i$ and the modified monomial $\tilde{f}_i$. In the following, we compute the colon ideals of products of these $\tilde{f}_i$. For convenience, we introduce some notation:

For a monomial $f$, by abuse of notation, we denote $\min(f) = \min(\supp(f))$ and $\max(f) = \max(\supp(f))$, 
where $\supp(f) = \{i \mid x_i \text{ divides } f\}$. For a subset $A \subseteq [n]$, we define $\odd(A)$ as follows: if there are no odd integers in $A$, then $\odd(A) = 1$; otherwise, $\odd(A)$ is the maximal odd integer in $A$. Similarly, we define $\even(A)$ as follows: if there are no even integers in $A$, then $\even(A) = 0$; otherwise, $\even(A)$ is the maximal even integer in $A$.

For a monomial $D = \prod\limits_{j \in A} \tilde{f}_j$, we denote $\bar{D} = \prod\limits_{j \in A} f_j$, $\esupp(D) = \supp(D/\bar{D})$, $\odd(D) = \odd(\esupp(D))$, and $\even(D) = \even(\esupp(D))$.

Let $D$ be a product of modified monomials such that $\supp(D)$ consists of consecutive integers. We define the following ideals:
\begin{align*}
	U(D) & = \bigl(x_j^{a_j} \mid j \text{ is odd and } j \in [\min(D) - 1, \max(D) + 1]\bigr),\\
	V(D) & = \bigl(x_j^{a_j} \mid j \text{ is even and } j \in [\min(D) - 1, \max(D) + 1]\bigr),
\end{align*}
where
\[
a_j = 
\begin{cases}
	1, & \text{if } j \in \esupp(D),\\
	{\bf w}_j, & \text{if } j \in \bigl([\min(D)-1, \max(D)+1] \cap [n]\bigr) \setminus \esupp(D),\\
	{\bf w}_n, & \text{if } j = n+1.
\end{cases}
\]
We also define
\[
E(D) = \bigl(x_{j}^{a_{j}} \mid j \in [\min(D)-1, \odd(D)-1] \text{ and } j \text{ is even}\bigr),
\]
and
\[
O(D) = \bigl(x_{j}^{a_j} \mid j \in [\min(D)-1, \even(D)-1] \text{ and } j \text{ is odd}\bigr).
\]

\medskip
We illustrate this definition with the following three examples.

\begin{Example}
    Consider the increasing edge-weighted path $P_{\bf w}^7$ corresponding to ${\bf w} = (1,1,2,2,2,3)$. From the definition following Theorem \ref{monoincrease2} and Definition \ref{def5}, we have $A(\Delta) = \{4\}$, $B(\Delta) = \{1,3\}$, and $C(\Delta) = \emptyset$. Choosing $D = g_2 = (x_5x_6)^2x_6^2$, we obtain $E(D) = (0)$, $O(D) = (x_5^2)$, $U(D) = (x_5^2,x_7^3)$, and $V(D) = (x_4^2,x_6)$.
\end{Example}

\begin{Example}
	Consider the increasing edge-weighted path $P_{\bf w}^9$ corresponding to ${\bf w} = (1,1,1,1,3,3,3,4)$. Then $A(\Delta) = \emptyset$, $B(\Delta) = \{5,6\}$, and $C(\Delta) = \{1,2,3\}$. Choosing $D = g_3 = (x_6x_7)^3 (x_7x_8)^3 x_8^3$, we get $E(D) = (0)$, $O(D) = (x_5^3,x_7^3)$, $U(D) = (x_5^3,x_7^3,x_9^4)$, and $V(D) = (x_6^3,x_8)$.
\end{Example}

\begin{Example}
	Consider the increasing edge-weighted path $P_{\bf w}^{12}$ corresponding to ${\bf w} = (1,1,1,1,1,2,2,2,2,2,3)$. Then $A(\Delta) = \emptyset$, $B(\Delta) = \{4,6\}$, and $C(\Delta) = \{9,8,7,3,2,1\}$. Choosing $D = g_6 = (x_5x_6)x_6(x_7x_8)^2(x_8x_9)^2(x_9x_{10})^2(x_{10}x_{11})^2x_{11}^2$, we have $E(D) = (x_4, x_6,x_8^2,x_{10}^2)$, $O(D) = (x_5)$, $U(D) = (x_5,x_7^2,x_9^2,x_{11})$, and 
	$V(D) = (x_4,x_6,x_8^2,x_{10}^2,x_{12}^3)$.
\end{Example}

\begin{Lemma}\label{lem:colon_product_edges}
Let $P_{\bf w}^{n+1}$ be an increasing weighted path, and let $g_t$ be defined as in Definition \ref{def5}. Then $g_t$ can be written as $g_t = \prod\limits_{i=1}^{t_k} D_i$, 
where the support of each $D_i$ forms a maximal block of $\supp(g_t)$.
For each $t \in [2, |\Delta|+1]$, we have
\[
\bigl(I(P_{\bf w}^{n+1})^{t} : g_t\bigr) = I + \sum_{i=1}^{t_k} \Gamma_i,
\]
where $\Gamma_i=E(D_i) + O(D_i) + (U(D_i) \cap V(D_i))$.
\end{Lemma}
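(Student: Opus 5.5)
We prove the two inclusions separately, working throughout with monomials, since all ideals in question are monomial.

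\emph{Inclusion $\supseteq$.} We have $I\subseteq (I^t:g_t)$ because $g_t$ is a product of $t-1$ factors $\tilde f_{\mu_i+1}$, each divisible by a generator $f_{\mu_i+1}$ of $I$; hence $I\cdot g_t\subseteq I^t$. For the generators of $\Gamma_i$ we argue one block $D=D_i$ at a time, since the blocks have disjoint supports and are not adjacent. For a generator $x_j^{a_j}$ of $E(D)$ or $O(D)$, or a product $x_j^{a_j}x_k^{a_k}$ with $j$ odd and $k$ even (the minimal generators of $U(D)\cap V(D)$), we exhibit one extra edge monomial. Multiplying by this monomial and redistributing the variables of $D$ along the path gives a product of $(\text{number of factors of }D)+1$ edge monomials $f_\ell$.

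This is the weighted analogue of the "even-connection'' argument of \cite{HHV}. Take $j<k$ of opposite parity inside $[\min(D)-1,\max(D)+1]$. We rewrite
\[
x_j^{a_j}\,\bar D\,(\text{extra part of }D)
\]
as a product of $f_j f_{j+2}\cdots$ and $f_{j+1}\cdots f_{k-1}$. The surplus variables $x_\ell^{{\bf w}_\ell-1}$ in $\tilde f$, on the strictly increasing stretches, are exactly what is needed when ${\bf w}_{\ell-1}<{\bf w}_\ell$. The reason is that an edge $f_\ell$ needs $x_\ell^{{\bf w}_\ell}$, while the preceding edge supplies only $x_\ell^{{\bf w}_{\ell-1}}$. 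This explains the exponent $a_j=1$ on $\esupp(D)$. The same walk, starting from $\min(D)-1$ and ending at an odd (respectively even) vertex of $\esupp(D)$, where only one power is needed, produces the generators of $E(D)$ (respectively $O(D)$); this is why the ranges stop at $\odd(D)-1$ and $\even(D)-1$. Since the remaining factors of $g_t$ lie in other blocks, we multiply the block contributions and the count comes out to exactly $t$.

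\emph{Inclusion $\subseteq$.} Let $h$ be a monomial with $hg_t\in I^t$ and $h\notin I$. Write $hg_t=u\cdot f_{i_1}\cdots f_{i_t}$. Since $h\notin I$, the support of $h$ carries no full edge. We then do a degree-counting argument along each maximal block of $\supp(g_t)$, in the spirit of Lemma \ref{lem:colon_x2} and Lemma \ref{lem_colon_leaf}.

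First, because the weights increase and the blocks are separated by vertices outside $\supp(g_t)$, any edge $f_{i_s}$ meeting a block $D_i$ lies in $[\min(D_i)-1,\max(D_i)+1]$. Moreover, the number of such edges exceeds the number of factors of $D_i$ for at least one $i$, since $h$ alone contains no edge. So we may localize to a single block $D$ with one extra edge.

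Second, within that block we compare exponents vertex by vertex. The vertices of $D$ alternate, and $\bar D$ supplies exactly one copy of each path edge's weight. Therefore an extra edge forces $h$ to supply $x_j^{a_j}$ at both ends of an alternating walk. If the walk leaves the block on both sides, this yields an odd and an even endpoint, that is, an element of $U(D)\cap V(D)$. If one end is absorbed by a surplus variable in $\esupp(D)$, this yields a generator of $E(D)$ or $O(D)$.

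We induct on the number of factors of $D$. At each step we strip off an endpoint edge using the leaf colon (Lemma \ref{lem_colon_leaf}), which applies because the weights increase, or the middle-vertex colon (Lemma \ref{lem:colon_x2}) when ${\bf w}_\ell={\bf w}_{\ell+1}$.

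The main obstacle is this second step. We must track exactly how the surplus factors $x_j^{{\bf w}_j-1}$ interact with unequal consecutive weights, and why the exponent drops to $1$ precisely on $\esupp(D)$. We also need to confirm that the specific ordering $\mu_1,\dots$ (from $A$, then $B$, then $C$, each in descending order) makes each block of $\supp(g_t)$ of the clean form where these walks are the only possibilities. Before writing the general induction, we would first verify the three worked examples directly, to pin down the boundary conventions at $n+1$ and at $\min(D)-1$.
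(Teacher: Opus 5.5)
\paragraph{The gap: the single-block containment is never proved.}
Your argument for $\supseteq$ is the same even-connection argument the paper uses. You rewrite $x_p^{a_p}g_t$ or $x_\alpha^{a_\alpha}x_\beta^{a_\beta}g_t$ as a product of $t$ edge monomials, with the surplus factors of $\tilde f_i$ covering the weight jumps. Your pigeonhole reduction for $\subseteq$ is also sound. Every edge in $hg_t=u f_{i_1}\cdots f_{i_t}$ must meet $\supp(g_t)$, since otherwise it divides $h$, and each edge meets at most one block. So some block $D$ with $s$ factors receives $s+1$ edges supported in $[\min(D)-1,\max(D)+1]$, and the restriction of $h$ to that window lies in $(I^{s+1}:D)$.

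That only reduces the lemma to the single-block containment $(I^{s+1}:D)\subseteq I+E(D)+O(D)+(U(D)\cap V(D))$. This containment is the whole content of $\subseteq$, and your proposal does not prove it. The sentence ``an extra edge forces $h$ to supply $x_j^{a_j}$ at both ends of an alternating walk'' is the conclusion, not an argument, and you defer it yourself as ``the main obstacle''.

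The heuristic behind it is also inaccurate: $\bar D$ need not contain every path edge of its block. Take ${\bf w}=(1,1,2,2,3)$. Then $\Delta=B(\Delta)=\{1,3\}$, and $g_3=\tilde f_4\tilde f_2=x_2x_3^2x_4^2x_5^4$ is a single block with $\bar D=f_2f_4$. Yet $x_1x_6^3g_3=f_1f_3f_5\cdot x_5$ uses $f_3\nmid \bar D$, which is assembled from the surplus $x_3$. So walks can cross gaps of $\bar D$, and which ones exist depends on $\esupp(D)$. Controlling exactly this is the work left undone.

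\paragraph{The proposed tools do not drive the induction.}
\begin{itemize}
\item Lemma \ref{lem_colon_leaf} strips an edge only when it is a leaf edge of the ambient path. To remove an end factor of $D$, you must first know that none of the $s+1$ edges uses the vertex just outside $D$, and handle the opposite case separately.
\item When the end factor is a modified $\tilde f_\ell\ne f_\ell$, you are still left with a colon by $\tilde f_\ell/f_\ell$ that must be computed.
\item Lemma \ref{lem:colon_x2} is a colon by the single power $x_2^{{\bf w}_1}$, so it does not lower the exponent $t$ at all.
\item You never state the inductive hypothesis for $D/\tilde f_\ell$, whose $\esupp$, $\odd$ and $\even$ differ from those of $D$.
\end{itemize}

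\paragraph{How the paper handles it.}
The paper avoids these problems by inducting on $n$ and $t$ and always peeling at the left end of the whole path, where $x_1$ is a leaf. After a direct computation for $t=2$, it splits into cases according to whether some $f_{i_j}$ uses $x_1$ and whether $\min(g_t)=2$.
\begin{itemize}
\item If $x_1$ is used and $\min(g_t)\ge 3$, then $f_1\mid h$.
\item If neither holds, induction on $n$ applies.
\item If $x_1$ is unused and $\min(g_t)=2$, then $f_2$ is a leaf edge of $P_{\bf w}^{[2,n+1]}$. Lemma \ref{lem_colon_leaf} removes it, and the colon by $\tilde f_2/f_2$ is computed explicitly.
\item If $x_1$ is used and $\min(g_t)=2$, one writes $h=x_1^{{\bf w}_1}m$ with $m$ in a colon of $I^{t-1}$, and computes the extra colon by $\tilde f_2/x_2^{{\bf w}_2}$ by hand.
\end{itemize}
Some case analysis of this kind, tracking the surplus factors, is what your proposal is missing.
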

\begin{proof}
Let $I = I(P_{\bf w}^{n+1})$. Then $I \subseteq (I^t:g_t)$ by the choice of $g_t$. To prove that the right-hand side is contained in the left-hand side, it suffices to show that for any $i \in [t_k]$, we have $E(D_i) \subseteq (I^t:g_t)$, $O(D_i) \subseteq (I^t:g_t)$, and $U(D_i) \cap V(D_i) \subseteq (I^t:g_t)$. We only prove $E(D_i) \subseteq (I^t:g_t)$ and $U(D_i) \cap V(D_i) \subseteq (I^t:g_t)$, as the inclusion $O(D_i) \subseteq (I^t:g_t)$ can be shown similarly.

\medskip
First, we prove $E(D_i) \subseteq (I^t:g_t)$. Let $x_p^{a_p} \in E(D_i)$, then $p$ is even and $p \in [\min(D_i)-1, \odd(D_i)-1]$. Choose $q$ to be the smallest odd number such that $q \in \esupp(D_i)$ and $p < q \le \odd(D_i)$. If $q = p+1$, then ${\bf w}_p < {\bf w}_{p+1}$. By the definitions of $g_t$ and $a_p$, $x_{p+1}^{{\bf w}_p}\bar{g_t}$ divides $g_t$, and $x_p^{{\bf w}_p - a_p}x_{p+1}^{{\bf w}_p}\bar{g_t} \mid g_t$. Thus, $x_p^{a_p}g_t = \left(f_p\bar{g_t}\right) \cdot \frac{x_p^{a_p}g_t}{x_p^{{\bf w}_p}x_{p+1}^{{\bf w}_p}\bar{g_t}} \in I^t$, where $f_p = x_p^{{\bf w}_p}x_{p+1}^{{\bf w}_p}$.

\medskip
Now, suppose $q > p+1$. We consider the following two cases:

(I) If ${\bf w}_p = {\bf w}_{p+1} = \cdots = {\bf w}_{q-1}$, we set $\Delta_p = \bigl\{\,j \in [p, q-1] \mid {\bf w}_j = {\bf w}_{j+1} \,\bigr\}$ and $\Delta_i = \bigl\{\,j \in [\min(D_i)-1, \max(D_i)-2] \mid {\bf w}_j = {\bf w}_{j+1} \,\bigr\}$. Then $\Delta_p = [p, q-2]$, $\Delta_p \subseteq \Delta_i$, and $\Delta_p \subseteq \Delta$. From the definition of $\bar{D_i}$, we know that $\prod\limits_{j=p+1}^{q-1}f_j$ divides $\bar{D_i}$, and $\bar{D_i}$ divides $\bar{g_t}$, where $\bar{g_t} = \prod\limits_{j=1}^{t-1} f_{\mu_j+1}$. Therefore, $\prod\limits_{j=p+1}^{q-1}f_j$ divides $\bar{g_t}$. Note that ${\bf w}_{q-1} < {\bf w}_q$, so $x_q^{{\bf w}_{q-1}}\bar{g_t}$ divides $g_t$. Furthermore, by the definition of $a_p$, $x_p^{{\bf w}_p - a_p}x_q^{{\bf w}_{q-1}}\bar{g_t} \mid g_t$. A simple calculation shows that
\[
x_p^{a_p}g_t = f_p\bigl(\prod\limits_{j=1}^{\frac{q-p-1}{2}}f_{p+2j}^2\bigr)\bigl(\frac{\bar{g_t}}{\prod_{j=p+1}^{q-1}f_j}\bigr) \cdot \frac{g_t}{x_p^{{\bf w}_p - a_p}x_q^{{\bf w}_{q-1}}\bar{g_t}} \in I^t.
\]

(II) If there exists some $r \in [p+1, q-2]$ such that ${\bf w}_r < {\bf w}_{r+1}$, we may assume that
${\bf w}_p = \cdots = {\bf w}_{r_1} < {\bf w}_{r_1+1} = \cdots = {\bf w}_{r_1+r_2} < {\bf w}_{r_1+r_2+1} = \cdots = {\bf w}_{r_1+r_2+r_3} < \cdots < {\bf w}_{r_1+r_2+\cdots+r_\ell+1} = \cdots = {\bf w}_{q-1}$. Let $\delta_j = \sum_{i=1}^j r_i$ for all $j \in [1, \ell]$. For each induced subpath 	$P_{\bf w}^{[p,\delta_1]}, P_{\bf w}^{[\delta_1+1, \delta_2]},\ldots, P_{\bf w}^{[\delta_\ell+1, q]}$,  the weight function  is a constant,
by similar arguments as (I), we can obtain that 
$x_p^{{\bf w}_p}x_{\delta_1}^{{\bf w}_{\delta_1}}(\prod\limits_{j=p+1}^{\delta_1-1}f_j) \in I^{\delta_1 - p}$, $x_{\delta_\ell+1}^{{\bf w}_{\delta_\ell+1}}x_q^{{\bf w}_{q-1}}(\prod\limits_{j=\delta_\ell+2}^{q-1}f_j) \in I^{q - \delta_\ell - 1}$, and $x_{\delta_{j-1}+1}^{{\bf w}_{\delta_{j-1}+1}}x_{\delta_j}^{{\bf w}_{\delta_j}}(\prod\limits_{m=\delta_{j-1}+2}^{\delta_j-1}f_m) \in I^{r_j - 1}$ for all $j \in [2, \ell]$. Furthermore, $h \mid \bar{D}_i$ and $\bar{D}_i \mid \bar{g}_t$, where
\begin{align*}
	h &= \left(\prod\limits_{j=p+1}^{\delta_1} f_j \right)
	\left(\prod\limits_{\alpha=2}^{\ell} \prod\limits_{\beta=\delta_{\alpha-1}+2}^{\delta_\alpha} f_\beta \right)
	\left(\prod\limits_{j=\delta_\ell+2}^{q-1} f_j \right) \in I^{q - p - \ell - 1},
\end{align*}
which implies $\bar{g}_t/h \in I^{t - q + p + \ell}$. By the definition of $a_p$, $x_p^{{\bf w}_p - a_p}\bar{g}_t \mid g_t$. Since ${\bf w}_{\delta_j} < {\bf w}_{\delta_j+1}$ for all $j \in [\ell]$ and ${\bf w}_{q-1} < {\bf w}_q$, we have $h_1\bar{g}_t \mid g_t$, where $h_1 = (x_p^{{\bf w}_p - a_p}x_q^{{\bf w}_{q-1}}) \cdot \prod\limits_{j=1}^{\ell} x_{\delta_j+1}^{{\bf w}_{\delta_j+1} - {\bf w}_{\delta_j}}$. Therefore,
\begin{align*}
	x_p^{a_p}g_t &= \left((hx_p^{{\bf w}_p}x_q^{{\bf w}_{q-1}}) \cdot \prod\limits_{j=1}^{\ell} x_{\delta_j+1}^{{\bf w}_{\delta_j+1} - {\bf w}_{\delta_j}} \right)\left(\frac{\bar{g}_t}{h}\right)\left(\frac{g_t}{h_1\bar{g}_t}\right) \\
	&= \left((x_p^{{\bf w}_p}x_{\delta_1}^{{\bf w}_{\delta_1}}) \cdot \prod\limits_{j=p+1}^{\delta_1-1}f_j\right) \cdot \left(\prod\limits_{j=2}^{\ell}\Bigl(x_{\delta_{j-1}+1}^{{\bf w}_{\delta_{j-1}+1}}x_{\delta_j}^{{\bf w}_{\delta_j}} \prod\limits_{m=\delta_{j-1}+2}^{\delta_j-1}f_m\Bigr)\right) \cdot \\
	&\quad \left(x_{\delta_\ell+1}^{{\bf w}_{\delta_\ell+1}}x_q^{{\bf w}_{q-1}} \prod\limits_{j=\delta_\ell+2}^{q-1}f_j\right) \left(\frac{\bar{g}_t}{h}\right) \cdot \left(\frac{g_t}{h_1\bar{g}_t}\right) \\
	&\in I^{\delta_1 - p} \cdot I^{q - \delta_\ell - 1} \cdot I^{\delta_\ell - \delta_1 - \ell + 1} \cdot I^{t - q + p + \ell} = I^t.
\end{align*}

Next, we prove $U(D_i) \cap V(D_i) \subseteq (I^t:g_t)$.

By the definitions of $U(D_i)$ and $V(D_i)$, it suffices to consider products of the form $x_\alpha^{a_\alpha} x_\beta^{a_\beta}$, where $\alpha < \beta$ and the factors are chosen from $U(D_i) \setminus O(D_i)$ and $V(D_i) \setminus E(D_i)$, respectively. If ${\bf w}_\alpha = \cdots = {\bf w}_\beta$, then by similar arguments to those in Case (I), $\bar{D}_i \mid \bar{g}_t$ and $\prod\limits_{j=\alpha}^{\beta-2}f_j \mid \bar{D_i}$. Furthermore, by the definitions of $a_\alpha$ and $a_\beta$, $x_{\alpha}^{{\bf w}_\alpha - a_\alpha}x_{\beta}^{{\bf w}_\beta - a_\beta}\bar{g}_t \mid g_t$. Therefore,
\begin{align*}
x_{\alpha}^{a_{\alpha}}x_{\beta}^{a_{\beta}}g_t &= (f_{\alpha}f_{\beta-1}) \cdot \bigl(\prod_{j=1}^{\frac{\beta - \alpha - 3}{2}}f_{\alpha+2j}^2\bigr)\left(\frac{\bar{g}_t}{\prod_{j=\alpha+1}^{\beta-2}f_j}\right) \cdot \left(\frac{g_t}{x_{\alpha}^{{\bf w}_\alpha - a_\alpha}x_{\beta}^{{\bf w}_\beta - a_\beta}\bar{g}_t}\right) \\
&\in I^{2 + (\beta - \alpha - 3) + (t - \beta + \alpha + 1)} = I^t.
\end{align*}

If ${\bf w}_\alpha < {\bf w}_\beta$, then there exists an $m \in [\alpha+1, \beta-1]$ such that ${\bf w}_m < {\bf w}_{m+1}$. By similar arguments to those in Case (II), we can deduce that 
$x_{\alpha}^{a_{\alpha}}x_{\beta}^{a_{\beta}}g_t \in I^t$.

To end the proof, we show the opposite inclusion.  Let $h$ be a minimal generator of $(I^t:g_t)$. Then $h = \frac{\prod\limits_{j=1}^{t}f_{i_j}}{\gcd\left(\prod\limits_{j=1}^{t}f_{i_j},g_t\right)}$. Without loss of generality, we may assume that $\min(f_{i_1}) \le \min(f_{i_2}) \le \cdots \le \min(f_{i_t})$.
We will prove that $h \in \Theta$, where $\Theta = I + \sum\limits_{i=1}^{t_k} \Gamma_i$, by induction on $n \ge 3$ and $t \ge 2$. The case $n = 3$ is trivial. Now, suppose $n \ge 4$. We first consider the case $t = 2$.

In this case, let $g_2 = \tilde{f}_\mu$ for some $\mu \in [2, n-1]$, then ${\bf w}_{\mu-1} = {\bf w}_\mu$ and the expression of $h$ simplifies to $h = \frac{f_{i_1}f_{i_2}}{\gcd(f_{i_1}f_{i_2},\tilde{f}_\mu)}$. If $\supp(f_{i_j}) \cap \supp(g_2) = \emptyset$ for some $j \in [2]$, then $h \in (f_{i_j}) \subseteq I$. Otherwise, we consider the following two cases:

(I) If ${\bf w}_{\mu} = {\bf w}_{\mu+1}$, then $g_2 = \tilde{f}_\mu = f_\mu = (x_\mu x_{\mu+1})^{{\bf w}_\mu}$. A simple calculation shows that
\[
h \in \left(x_{\mu-1}^{{\bf w}_{\mu-1}}x_\mu^{{\bf w}_{\mu}}, x_{\mu-1}^{{\bf w}_{\mu-1}}x_{\mu+2}^{{\bf w}_{\mu+2}}, x_{\mu}^{{\bf w}_{\mu}}x_{\mu+1}^{{\bf w}_{\mu+1}}, x_{\mu+1}^{{\bf w}_{\mu+1}}x_{\mu+2}^{{\bf w}_{\mu+2}}\right) \subseteq U(D_1) \cap V(D_1)\subseteq \Theta.
\]

(II) If ${\bf w}_{\mu} < {\bf w}_{\mu+1}$, then $g_2 = \tilde{f}_\mu = f_\mu = (x_\mu x_{\mu+1})^{{\bf w}_\mu} \prod\limits_{j=\mu+1}^{\lambda} x_j^{{\bf w}_j - 1}$, where $\lambda$ is the largest index such that ${\bf w}_\mu < {\bf w}_{\mu+1} < \cdots < {\bf w}_\lambda$. Furthermore, the indices $i_1$ and $i_2$ in the expression of $h$ satisfy $i_1, i_2 \in [\mu-1, \lambda]$.

If $\lambda = \mu+1$, then
$h \in \left(x_{\mu}^{{\bf w}_{\mu}}, x_{\mu-1}^{{\bf w}_{\mu-1}}x_{\mu+2}^{{\bf w}_{\mu+2}}, x_{\mu+1}x_{\mu+2}^{{\bf w}_{\mu+2}}\right) \subseteq \Gamma_1\subseteq \Theta$.
Otherwise, $h \in \left(x_{\mu-1}^{{\bf w}_{\mu-1}}, x_{\mu+1}, \ldots, x_{\lambda-1}, x_\lambda x_{\lambda+1}^{{\bf w}_{\lambda+1}}\right) \subseteq \Gamma_1\subseteq \Theta$.

Now, assume $t \ge 3$. We consider four cases:

(1) If $\min(f_{i_1}) = 1$ and $\min(g_t) = 2$, then $x_1$ does not divide $\gcd(\prod\limits_{j=1}^{t}f_{i_j},g_t)$. Thus, we can write $g_t = x_2^{{\bf w}_2}\left(\tilde{f}_2/x_2^{{\bf w}_2}\right) \cdot g_{t-1}$ and $h = x_1^{{\bf w}_1} m$, where  $m=\frac{ \prod\limits_{j=2}^{t}f_{i_j}}{\gcd ( \prod\limits_{j=2}^{t}f_{i_j},(\tilde f_2/x_2^{{\bf w}_2}) g_{t-1})}$. It follows that $m \in (I^{t-1} : g_{t-1}(\tilde f_2/x_2^{{\bf w}_2}))$. By the induction hypothesis, $\left(I^{t-1} : g_{t-1}\right) = I + L + K$, where $L = E(D_1/\tilde{f}_2) + O(D_1/\tilde{f}_2) + (U(D_1/\tilde{f}_2) \cap V(D_1/\tilde{f}_2))$ and $K = \sum\limits_{i=2}^{t_k}\Gamma_i$.

Note that  $\supp(K)\cap\supp(\tilde f_2/x_2^{{\bf w}_2})=\emptyset$ and  $\supp(L)\cap\supp(\tilde f_2/x_2^{{\bf w}_2})=\{\max(\tilde f_2)\}$. We have
\begin{align*}
x_1^{{\bf w}_1}(K:(\tilde f_2/x_2^{{\bf w}_2})) &= x_1^{{\bf w}_1}K \subseteq \Theta, \quad \text{and} \\
x_1^{{\bf w}_1}(L:(\tilde f_2/x_2^{{\bf w}_2})) &=
\begin{cases}
x_1^{{\bf w}_1}\left(L : x_{3}^{{\bf w}_{3}}\right), & \text{if $\max(\tilde{f}_2) = 3$ and ${\bf w}_2 = {\bf w}_3$}, \\
x_1^{{\bf w}_1}\left(L : x_{3}^{{\bf w}_{2} + {\bf w}_{3} - 1}\right), & \text{if $\max(\tilde{f}_2) = 3$ and ${\bf w}_2 < {\bf w}_3$}, \\
x_1^{{\bf w}_1}\left(L : x_{\max(\tilde{f}_2)}^{{\bf w}_{\max(\tilde{f}_2)} - 1}\right), & \text{if $\max(\tilde{f}_2) \ge 4$}.
\end{cases}
\end{align*}
Since $x_1^{{\bf w}_1} \in O(D_1)$ if and only if $x_3^{a_3} \in O(D_1)$, we have  $x_1^{{\bf w}_1}(L:(\tilde f_2/x_2^{{\bf w}_2}))\subseteq  \Gamma_1\subseteq\Theta$. On the other hand, it is easy to verify that $(I(P_{\bf w}^{[2,n+1]}): (\tilde f_2/f_2)) \subseteq I +\Gamma_1 \subseteq \Theta$. Therefore, $x_1^{{\bf w}_1} (I^{t-1} : g_{t-1}(\tilde f_2/x_2^{{\bf w}_2}))\subseteq \Theta$, which implies $h \in \Theta$.

(2) If $\min(f_{i_1}) = 1$ and $\min(g_t) \ge 3$, then $(I^t:g_t) \subseteq I$, since $f_1 \mid h$.

(3) If $\min(f_{i_1}) \ge 2$ and $\min(g_t) = 2$, we can write $g_t = f_2 \cdot (\tilde{f}_2 / f_2) \cdot g_{t-1}$. Thus, by Lemma \ref{lem_colon_leaf},  $h\in ((I(P_{\bf w}^{[2,n+1]})^{t-1} : g_{t-1}) : (\tilde f_2 / f_2))$. By induction on $n$ and $t$,
\[
\left(I(P_{\bf w}^{[2,n+1]})^{t-1} : g_{t-1}\right) = I(P_{\bf w}^{[2,n+1]}) + L + K.
\]
Using arguments similar to those for $K$ and $L$ in Case (1), we can deduce that $(K:(\tilde f_2/f_2))\subseteq  \Theta$ and $(L:(\tilde f_2/f_2))\subseteq  \Theta$. It is also easy to check that  $(I(P_{\bf w}^{[2,n+1]}): (\tilde f_2/f_2))\subseteq I+E(D_1) + O(D_1) + (U(D_1) \cap V(D_1))\subseteq \Theta$. Therefore,$((I(P_{\bf w}^{[2,n+1]})^{t-1} : g_{t-1}) : (\tilde f_2 / f_2))\subseteq \Theta$, which implies $h \in \Theta$.

(4) If $\min(f_{i_1}) \ge 2$ and $\min(g_t) \ge 3$, then by induction on $n$,  $h\in (I(P_{\bf w}^{[2,n+1]})^{t} : g_{t})=I(P_{\bf w}^{[2,n+1]}) + \sum_{i=1}^{t_k}\Gamma_i\subseteq  \Theta$.
\end{proof}

Given an increasing weighted path $P_{\bf w}^{n+1}$,  we set  $f_i = (x_ix_{i+1})^{{\bf w}_i}$  for  $i\in[n-1]$, and $\Delta = \bigl\{\, i \in [n-2] \mid {\bf w}_i = {\bf w}_{i+1} \,\bigr\}$.
We can rewrite set $B(\Delta)$ as  $B(\Delta)=\{b_1,b_2,\ldots,b_{2b(\Delta)-1},b_{2b(\Delta)}\}$, where $b_1<b_2<\cdots<b_{2b(\Delta)}$. In particular, if $b(\Delta)=0$, then $B(\Delta)=\emptyset$.
As stated in Definition \ref{def5}, we can define the monomial $g_t=\prod\limits_{i=1}^{t-1} \tilde{f}_{\mu_i+1}$.
 For each  $j\in[n+1]\setminus\esupp(g_t)$, we define a function $\eta_j$ as follows:
\[
\eta_j=
\begin{cases}
	{\bf w}_{j-1}-1, &\text{if $j=n+1$},\\
{\bf w}_{j-1}, &\text{if   $j=b_{2i-1}+1$  such that  $\tilde{f}_{b_{2i}+1}\nmid g_t$},\\
	{\bf w}_{j-2}-1, &\text{if  $j=b_{2i-1}+2$ such that  $\tilde{f}_{b_{2i}+1}\nmid g_t$},\\
	{\bf w}_j-1, &\text{otherwise}.
\end{cases}
\]

\begin{Lemma}\label{colon}
	For each $t \in [2, |\Delta|+1]$, let  $g_t$ be defined as in Definition \ref{def5}. We write $g_t$ in the form  $g_t = \prod\limits_{i=1}^{t_k} D_i$ as in Lemma \ref{lem:colon_product_edges}.
Let $\Lambda =[n+1]\setminus\esupp(g_t)$ and $\rho_t=g_t\prod\limits_{j\in \Lambda}x_j^{\eta_j}$. Then
\[
(I(P_{\bf w}^{n+1})^t:\rho_t)=I(P_{n+1}) +\Phi_t+\Psi+ \sum_{i=1}^{t_k} \Upsilon_i,
\]
	where $P_{n+1}$ is a trivially weighted path, $\Phi_t=(\{x_{b_{2i-1}},x_{b_{2i-1}+2}\}:\tilde{f}_{b_{2i}+1}\nmid g_t \text{\ with }i\in[b(\Delta)])$,
 $\Psi=(x_j\mid {\bf w}_j<{\bf w}_{j+1} \text{ and }  j\neq b_{2i-1}+1 \text{ for any } i\in[b(\Delta)])$ and $\Upsilon_i=E'(D_i) + O'(D_i) + (U'(D_i) \cap V'(D_i))$ with
 $E'(D_i)=\bigl(x_j\mid j \in  \supp(E(D_i))\bigr)$, $O'(D_i)=\bigl(x_j\mid j \in  \supp(O(D_i))\bigr)$, $U'(D_i)=\bigl(x_j\mid j \in  \supp(U(D_i))\bigr)$ and  $V'(D_i)=\bigl(x_j\mid j \in  \supp(V(D_i))\bigr)$.
\end{Lemma}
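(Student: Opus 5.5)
The plan is to reduce the computation to Lemma \ref{lem:colon_product_edges} through the transitivity of colons:
\[
(I^t:\rho_t)=\bigl((I^t:g_t):h\bigr), \qquad h=\prod_{j\in\Lambda}x_j^{\eta_j},
\]
where $I=I(P_{\bf w}^{n+1})$. Lemma \ref{lem:colon_product_edges} gives $(I^t:g_t)=I+\sum_i\Gamma_i$. Since colon by a monomial distributes over sums of monomial ideals, we get
\[
(I^t:\rho_t)=(I:h)+\sum_i(E(D_i):h)+\sum_i(O(D_i):h)+\sum_i\bigl((U(D_i)\cap V(D_i)):h\bigr),
\]
and each summand can be computed generator by generator. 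For a pure power $x_j^{a_j}$ we have $(x_j^{a_j}):x_j^{\eta_j}=(x_j^{\max(a_j-\eta_j,0)})$. For the monomial intersection we use $(U\cap V):h=(U:h)\cap(V:h)$.

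First I would compute $(I:h)$. Take a generator $f_j=(x_jx_{j+1})^{{\bf w}_j}$ with $j,j+1\in\Lambda$. Generically $\eta_j={\bf w}_j-1$ and $\eta_{j+1}={\bf w}_{j+1}-1\ge {\bf w}_j-1$, so $f_j:h=x_jx_{j+1}^{{\bf w}_j-\eta_{j+1}}$. This equals $x_jx_{j+1}$ when ${\bf w}_j={\bf w}_{j+1}$, which is a generator of $I(P_{n+1})$. When ${\bf w}_j<{\bf w}_{j+1}$ it collapses to $x_j$, which is a generator of $\Psi$.

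The exceptional indices need separate treatment. These are $j=b_{2i-1}+1$ and $b_{2i-1}+2$ in the case $\tilde f_{b_{2i}+1}\nmid g_t$, where the exponents ${\bf w}_{j-1}$ and ${\bf w}_{j-2}-1$ are tuned. For these I would check, using the structure of the pair $(b_{2i-1},b_{2i})$ (either consecutive in a type-2 block or a glued pair), that the colon produces exactly $x_{b_{2i-1}}$ and $x_{b_{2i-1}+2}$. These are the generators of $\Phi_t$, and the construction deliberately excludes $b_{2i-1}+1$ from $\Psi$ for this reason. The index $j=n+1$ with $\eta_{n+1}={\bf w}_n-1$ is handled the same way.

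For edges meeting $\esupp(g_t)$, the exponent $a_j=1$ already holds and the variable is not in $\Lambda$. Such edges are absorbed by the $\Upsilon_i$ terms, or they reduce to $x_jx_{j+1}$.

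Next I would treat the $\Gamma_i$. A generator $x_j^{a_j}$ of $E(D_i)$ has either $a_j=1$ (when $j\in\esupp(D_i)$) or $a_j={\bf w}_j$ with $j\in\Lambda$. In the generic case the second possibility gives exponent $1$. So $(E(D_i):h)=E'(D_i)$, and similarly $(O(D_i):h)=O'(D_i)$, $(U(D_i):h)=U'(D_i)$ and $(V(D_i):h)=V'(D_i)$. The one exception is $j=n+1$, where $a_{n+1}={\bf w}_n$ and $\eta_{n+1}={\bf w}_n-1$, again giving exponent $1$.

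The main obstacle is the exceptional indices near $b_{2i-1}$. There $\eta_j$ can exceed $a_j-1$, for example $\eta_{b_{2i-1}+1}={\bf w}_{b_{2i-1}}=a_j$, and a pure power then becomes the unit ideal. I must verify that such $j$ never lies in $\supp\Gamma_i$ for the relevant $D_i$. The reason to expect this is that $\tilde f_{b_{2i}+1}\nmid g_t$ means that region is untouched by $g_t$. Alternatively, the resulting linear generators $x_{b_{2i-1}}$ and $x_{b_{2i-1}+2}$ may be exactly those already in $\Phi_t$. I would settle this by case analysis on whether $b_{2i-1},b_{2i}$ come from a type-2 block or a glued type-1 pair, following the recursive definition of $B(\Delta)$.

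Finally, the reverse inclusion is automatic: the equalities above are computed as equalities of monomial colon ideals, not merely as containments.
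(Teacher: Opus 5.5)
Your framework is the paper's: write $(I^t:\rho_t)=((I^t:g_t):h)$ with $h=\prod_{j\in\Lambda}x_j^{\eta_j}$, apply Lemma \ref{lem:colon_product_edges}, and take the colon term by term. The paper merely splits this into two inclusions, and gets $I(P_{n+1})+\Phi_t+\Psi\subseteq (I:\rho_t/\bar g_t)$ from $\bar g_t\in I^{t-1}$. The gap is the step you yourself flag as the main obstacle: you do not carry it out, and your first way out is false.

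Take ${\bf w}=(1,1,2,3,3,3,4)$, so that $\Delta=\{1,4,5\}$, $A(\Delta)=\{1\}$ and $B(\Delta)=\{4,5\}$, and let $t=2$. Then $g_2=\tilde f_2=x_2x_3^2x_4^2$, $\esupp(g_2)=\{3,4\}$ and $h=x_5^3x_6^2x_7^3x_8^3$. The exceptional index $b_1+1=5$ lies in $\supp\Gamma_1=[1,5]$. Since $\eta_5={\bf w}_4=a_5$, we get $(U(D_1):h)=(1)$, so $((U(D_1)\cap V(D_1)):h)=(x_2,x_4)$, not $U'(D_1)\cap V'(D_1)$.

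Your bookkeeping of $\Phi_t$ also breaks here. Because $4\in\esupp(g_2)$, we have $(f_4:h)=x_4^3$, so $x_{b_1}=x_4$ comes only from this collapsed $\Gamma_1$-term, not from $(I:h)$. The paper's one-line claim $(\Gamma_i:h)\subseteq\Upsilon_i+\Psi$ fails on the same example, so $\Phi_t$ really is needed at this step. The lemma itself survives: both sides equal $(x_1,x_2,x_3,x_4,x_6,x_7x_8)$.

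Two facts are missing, and you should prove them.
\begin{itemize}
\item[(a)] An index $b_{2i-1}+1$ with $\tilde f_{b_{2i}+1}\nmid g_t$ meets some $\supp\Gamma_j$ only as $\max(D_j)+1$, with $\max(D_j)=b_{2i-1}$. The index $b_{2i-1}+2$, whose $\eta$ drops below the generic value in the glued case, meets none of them.
\item[(b)] In case (a), the collapsed term lies in $E'(D_j)+O'(D_j)+\Phi_t$. Note that it is generated by all $x_k$ with $k\in[\min(D_j)-1,b_{2i-1}]$ and $k\equiv b_{2i-1}\pmod 2$, not just by $x_{b_{2i-1}}$; your second alternative overlooks this.
\end{itemize}

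For (b), $x_{b_{2i-1}}\in\Phi_t$ directly. For the remaining $k$ the missing idea is structural. Since $g_t$ contains no $C(\Delta)$-factor and no $B(\Delta)$-factor below $b_{2i}$ (by the ordering of the $\mu_i$), the factor of $D_j$ ending at $b_{2i-1}$ is $\tilde f_{\nu+1}$ with $\nu\in A(\Delta)$. Non-gluability then forces $b_{2i-1}-\nu\ge 3$, hence $b_{2i-1}-1\in\esupp(D_j)$. This gives $k\le\odd(D_j)-1$ for even $k<b_{2i-1}$ and $k\le\even(D_j)-1$ for odd $k<b_{2i-1}$, so $x_k\in E'(D_j)+O'(D_j)$.

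Once (a) and (b) are established, your exact term-by-term computation closes and yields both inclusions at once.
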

\begin{proof}
	Let $I=I(P_{\bf w}^{n+1})$.  First,  we   prove that the right-hand side is contained in the left-hand side.  Since $\rho_t=g_t\prod\limits_{j\in \Lambda}x_j^{\eta_j}=(\bar{g_t}\prod\limits_{j\in \esupp(g_t)}x_j^{{\bf w}_j-1})(\prod\limits_{j\in \Lambda }x_j^{\eta_j})$, it is easy to verify that 
\[
	(I(P_{n+1}) +\Phi_t+\Psi)\subseteq \Bigl(I:(\prod\limits_{j\in \esupp(g_t)}x_j^{{\bf w}_j-1})\prod\limits_{j\in \Lambda}x_j^{\eta_j}\Bigr)\subseteq (I^t:\rho_t).
\]
On the other hand, 	from the beginning  of the proof of Lemma \ref{lem:colon_product_edges}, we know that for any $i\in [t_k]$ we have $\Gamma_i\subseteq (I^{t} : g_t)$.
For any  $h\in \Upsilon_i$, by the definition of $\eta_j$, $E(D_i),  O(D_i),U(D_i)$ and $V(D_i)$, we can deduce that
 $h\prod\limits_{j\in \Lambda}x_j^{\eta_j}\in \Gamma_i\subseteq(I^t:g_t)$. So $h\in (I^t:\rho_t)$.

Next,  we   prove that the left-hand side is contained in the right-hand side. By Lemma \ref{lem:colon_product_edges},  $(I^t:g_t)=I + \sum\limits_{i=1}^{t_k} \Gamma_i$. Moreover, $(I:\prod\limits_{j\in \Lambda}x_j^{\eta_j})\subseteq I(P_{n+1})+\Phi_t+\Psi$   is trivial.
For any $i\in [t_k]$, since $\supp(\Gamma_i)=[\min(D_i)-1,\max(D_i)+1]$,  
$(\Gamma_i:\prod\limits_{j\in\Lambda}x_j^{\eta_j})=(\Gamma_i:\prod\limits_{j\in\Lambda_1}x_j^{\eta_j})$,
where $\Lambda_1=[\min(D_i)-1,\max(D_i)+1]\setminus\esupp(D_i)$. Note that we always have $\max(D_i)\in \Delta\cup\{n-1,n\}$. By the definition of $\eta_j$, 
$(\Gamma_i:\prod\limits_{j\in \Lambda_1}x_j^{\eta_j})\subseteq \Upsilon_i+\Psi$ holds.
 We complete the proof. 
\end{proof}

 Given an increasing weighted path $P_{\bf w}^{n+1}$,  we set  $f_i = (x_ix_{i+1})^{{\bf w}_i}$  for  $i\in[n-1]$, and $\Delta = \bigl\{\, i \in [n-2] \mid {\bf w}_i = {\bf w}_{i+1} \,\bigr\}$.
We can also rewrite set $C(\Delta)$ as  $C(\Delta)=\{c_1,c_2,\ldots,c_{3c(\Delta)}\}$, where $c_1<c_2<\cdots<c_{3c(\Delta)}$. In particular, if $c(\Delta)=0$, then $C(\Delta)=\emptyset$. 
Now, we will prove the main theorem.

\begin{proof}[Proof of Theorem \ref{maintheorem}] Let $I=I(P_{\bf w}^{n+1})$.  The notation  $\rho_t$, $\Lambda$, $E'(D_i)$, $O'(D_i)$, $U'(D_i)$ and $V'(D_i)$ are defined as  in Lemma \ref{colon}. 
Since $\rho_t\notin I^t$, we would like  to prove that $\depth (S /(I^t: \rho_t)) \le d(\Delta,t)$. The result follows from Lemmas \ref{lowerbounds} and \ref{radup}.
There are three cases:

(1) If $t \in[2, a(\Delta)+1]$, then $\tilde{f}_{b_{2i}+1}\nmid g_t$  for any $i\in[b(\Delta)]$.
By Lemma \ref{colon},  $(I^t:\rho_t)=I(P_{n+1})+ \Phi_t +\Psi+\sum\limits_{i=1}^{t_k} \Upsilon_i$ 	and $\Phi_t=(\{x_{b_{2i-1}},x_{b_{2i-1}+2}\}:i\in[b(\Delta)])$. 
Easy to verify that
\[
[n+1]\setminus\Omega=\left(\supp(I^t:\rho_t)\setminus\Omega\right) \sqcup \left((\bigcup\limits_{j=1}^{b(\Delta)}\{b_{2j-1}+1\})\setminus\bigcup\limits_{i=1}^{t_k}\{\max(D_i)+1\}\right)\sqcup \Pi,
\]
where $\Omega=\bigcup\limits_{i=1}^{t_k}[\min(D_i)-1,\max(D_i)+1]$ and   $\Pi=\{j\in A(\Delta): \tilde{f}_{j+1}\nmid g_t \text{ and } j-1\in  [n+1]\setminus C(\Delta) \}$. 
Furthermore,
\[
\Omega=\left(\supp(I^t:\rho_t)\cap\Omega\right)  \sqcup \left((\bigcup\limits_{j=1}^{b(\Delta)}\{b_{2j-1}+1\})\cap(\bigcup\limits_{i=1}^{t_k}\{\max(D_i)+1\})\right).
\]
Therefore, 
\[
[n+1]=\supp(I^t:\rho_t)\sqcup  (\bigcup\limits_{j=1}^{b(\Delta)}\{b_{2j-1}+1\})\sqcup   \Pi.
\]
Since $|(\bigcup\limits_{j=1}^{b(\Delta)}\{b_{2j-1}+1\})\sqcup\Pi|=b(\Delta)+|\Pi|$, $\depth(S/(I^t:\rho_t))=b(\Delta)+|\Pi|+\depth(S_1/H_1)$, where $S_1=\KK[x_i\mid i\in\supp(H_1)]$, and
$H_1=\bigl(\sum\limits_{i\in M_t} J_i\bigr)+K+I(P')$ 
with $M_t=\bigl\{i\in[t_k]\mid \max(D_i)\in C(\Delta)\sqcup\{n-1,n\}\bigr\}$, $J_i=(U'(D_i)\setminus O'(D_i)) \cap (V'(D_i)\setminus E'(D_i))$, 
\[
K=
\begin{cases}
	(x_nx_{n+1}), &\text{if ${\bf w}_{n-1}<{\bf w}_n$ or $n=b_{2b(\Delta)-1}+3$},\\
	(x_{n-1}x_n,x_nx_{n+1}), &\text{otherwise},
\end{cases}
\]
 and $P'$ is an induced subpath of $P_{n+1}$ on the set $C(\Delta)\sqcup\Pi_1$, where $\Pi_1=\{j\in A(\Delta): \tilde{f}_{j+1}\nmid g_t \text{ and } j\in C(\Delta)\}$.

To compute $\depth(S/(I^t:\rho_t))$,  it is enough to 	compute $\depth(S_1/H_1)$.
Applying \cite[Lemma 2.10]{LVZ} to the edge $e_{c_1}$, we have 
\[
\depth(S_1/H_1)\le
\begin{cases}
	\depth(S_{11}/H_{11})+1, &\text{if $c_1+4\notin \Pi_1$},\\
	\depth(S_{12}/H_{12})+2, &\text{if  $c_1+4\in \Pi_1$},
\end{cases}
\]
where each $S_{1j}=\KK[x_q\mid q\in\supp(H_{1j})]$,
\[
H_{1j}=
\begin{cases}
	\bigl(\sum\limits_{i\in M_t} J_i\bigr)+K+I(P'_j), &\text{if  $\max(D_j)\neq c_1$ for all $j\in M_t$},\\
	\bigl(\sum\limits_{i\in M_t\setminus\{\ell\}} J_i\bigr)+K+I(P'_j), &\text{if $\max(D_\ell)=c_1$ for some $\ell\in M_t$},\\
\end{cases}
\]
$P'_1$ is an induced subpath of $P_{n+1}$ on the set $(C(\Delta)\setminus\{c_1,c_2,c_3\})\sqcup\Pi_1$ and $P'_2$ is an induced subpath of $P_{n+1}$ on the set $(C(\Delta)\setminus\{c_1,c_2,c_3\})\sqcup(\Pi_1\setminus\{c_1+4\})$.

Now we need to compute $\depth(S_{1j}/H_{1j})$ for $j\in[2]$. Applying \cite[Lemma 2.10]{LVZ} successively to the edges $e_{c_4},e_{c_7},\ldots,e_{c_{3c(\Delta)-5}},e_{c_{3c(\Delta)-2}}$ yields
$\depth(S_{11}/H_{11})\le c(\Delta)+|\Pi_1|$ and $\depth(S_{12}/H_{12})\le c(\Delta)+|\Pi_1|-1$.
Note that from  the definition of $\Pi_1$, we have  $|\Pi_1|=a(\Delta)-(t-1)-| \Pi|$.
Therefore, 
 $\depth(S_1/H_1)\le c(\Delta)+|\Pi_1|+1=c(\Delta)+a(\Delta)-|\Pi|-t+2$. Thus, $\depth (S/(I^t:\rho_t))\le  k(\Delta)-t+2= d(\Delta,t)$.

(2) If $t\in  [a(\Delta)+1, a(\Delta) + 2b(\Delta)+1]$, then   $d(\Delta,a(\Delta)+2j+1)= d(\Delta,a(\Delta)+2j+2)$  for any $0\le j\le b(\Delta)-1$. 
Then by \cite[Lemma 2.6]{HLTZ}, we only need to consider the case where  $t = a(\Delta) + 2u+1$ with $0\le u\le b(\Delta)$. The case $u=0$ follows from (1). In the following, we assume that $u\in[b(\Delta)]$. 
By Lemma \ref{colon},  $(I^t:\rho_t)=I(P_{n+1})+ \Phi_t +\Psi+\sum\limits_{i=1}^{t_k} \Upsilon_i$ and $\Phi_t=(\{x_{b_{2i-1}},x_{b_{2i-1}+2}\}:i\in[b(\Delta)-u])$.
Easy to check that
$[n+1]\setminus\Omega=\left(\supp(I^t:\rho_t)\setminus\Omega\right)\sqcup \left(\{b_{2i-1}+1\mid i\in[b(\Delta)-u]\}\right)$ and 
$\Omega\subseteq \supp(I^t:\rho_t)$.
Therefore, 
$[n+1]=\supp(I^t:\rho_t)\sqcup  \left(\{b_{2i-1}+1\mid i\in[b(\Delta)-u]\}\right)$.
Since $|\{b_{2i-1}+1\mid i\in[b(\Delta)-u]\}|=b(\Delta)-u$, $\depth(S/(I^t:\rho_t))=b(\Delta)-u+\depth(S_2/H_2)$, where $S_2=\KK[x_i\mid i\in\supp(H_2)]$,
\begin{align*}
		H_2=\bigl(\sum\limits_{i\in [t_k]} J_i\bigr)+I(P'')+
		\begin{cases}
			(x_nx_{n+1}), &\text{if ${\bf w}_{n-1}<{\bf w}_n$},\\
			(x_{n-1}x_n,x_nx_{n+1}), &\text{if ${\bf w}_{n-1}={\bf w}_n$},
		\end{cases}
	\end{align*}
 and $P''$ is an induced subgraph of $P_{n+1}$ on the set $C(\Delta)$. By similar arguments as the case (1), we can deduce that  $\depth(S_2/H_2)\le c(\Delta)+1$. Therefore, $\depth (S/(I^t:\rho_t))\le b(\Delta)-u+c(\Delta)+1= d(\Delta,t)$.

(3) If $t\in  [a(\Delta) + 2b(\Delta)+1, |\Delta|]$, then we also have that $d(\Delta,a(\Delta) + 2b(\Delta)+3j+1)=d(\Delta,a(\Delta) + 2b(\Delta)+3j+2)=d(\Delta,a(\Delta) + 2b(\Delta)+3j+3)$  for any $0\le j\le c(\Delta)-1$. 
As a result, we only need to consider the case where  $t = a(\Delta) + 2b(\Delta)+3v+1$ with $0\le v\le c(\Delta)-1$. The case $v=0$ follows from (2). In the following, we assume that $v\in[c(\Delta)-1]$. In this case,  $(I^t:\rho_t)=I(P_{n+1}) +\Psi+\sum\limits_{i=1}^{t_k} \Upsilon_i$ and $\Phi_t=(0)$. 
		By the definition of $g_t$ and $\rho_t$, we have 
	 $\tilde{f}_{j+1}\mid \rho_t$  for any $j\in A(\Delta)\cup B(\Delta)\cup\{c_{3(c(\Delta)-v)+1},c_{3(c(\Delta)-v)+2},\ldots,c_{3c(\Delta)}\}$. Therefore,  $(I^t:\rho_t)=\Psi+\sum\limits_{i=1}^{t_k}(O'(D_i)+E'(D_i))+H_3$, where 
	\begin{align*}
		H_3=\bigl(\sum\limits_{i\in [t_k]} J_i\bigr)+I(P''')+
		\begin{cases}
			(x_nx_{n+1}), &\text{if ${\bf w}_{n-1}<{\bf w}_n$},\\
			(x_{n-1}x_n,x_nx_{n+1}), &\text{if ${\bf w}_{n-1}={\bf w}_n$},
		\end{cases}
	\end{align*}
and	$P'''$ is an induced subgraph of $P_{n+1}$ on the set $\{c_1,\ldots,c_{3(c(\Delta)-v)}\}$. 
	Since the ideal $\Psi+\sum\limits_{i=1}^{t_k}(O'(D_i)+E'(D_i))$ is generated by variables,   $\depth(S/(I^t:\rho_t))=\depth(S_3/H_3)$, where $S_3=\KK[x_i\mid i\in\supp(H_3)]$. By similar arguments as the case (1), we can deduce that  $\depth (S/(I^t:\rho_t))=\depth(S_3/H_3)\le c(\Delta)-v+1= d(\Delta,t)$. 
 \end{proof}

\medskip
\hspace{-6mm} {\bf Acknowledgment}

 \vspace{3mm}
\hspace{-6mm}  This research is supported by  the National Natural Science Foundation of China (No.
12471246). The authors are grateful to the software systems \cite{Co} and \cite{GS}
 for providing us with a large number of examples.

\medskip
\hspace{-6mm} {\bf Data availability statement}

\vspace{3mm}
\hspace{-6mm}  The data used to support the findings of this study are included within the article.

\medskip
\hspace{-6mm} {\bf Conflict of interest}

\vspace{3mm}
\hspace{-6mm}  The authors declare that they have no competing interests.

	\end{document}